\documentclass{article}

\setlength{\oddsidemargin}{0in}
\setlength{\evensidemargin}{-0.0625in}
\setlength{\textwidth}{6.56in}
\setlength{\topmargin}{-.5in}
\setlength{\textheight}{8.5in}


\usepackage{amssymb,bm}
\usepackage{amsmath}
\usepackage{amsthm}
\usepackage{graphicx}
\usepackage{caption,subcaption}
\usepackage{hyperref}
\hypersetup{pdfborder=0 0 0}


\newtheorem{theorem}{{\sc Theorem}}[section]

\newtheorem{lemma}[theorem]{{\sc Lemma}}

\newtheorem{remark}[theorem]{Remark}

\newtheorem{definition}[theorem]{Definition}

\newcommand{\bb}[1]{\mathbb{ #1}}


\newcommand{\dOm}{\partial\Omega}

\newcommand{\bra}[1]{\overline{#1}}

\newcommand{\Trc}{\mathrm{Tr}\,}

\newcommand{\hf}{\displaystyle\frac{1}{2}}
\newcommand{\nth}[1]{\displaystyle\frac{1}{#1}}

\newcommand{\Grad}{\nabla}

\newcommand{\Md}{\partial}

\newcommand{\Tld}[1]{\widetilde{#1}}

\def\Xint#1{\mathchoice
{\XXint\displaystyle\textstyle{#1}}%
{\XXint\textstyle\scriptstyle{#1}}%
{\XXint\scriptstyle\scriptscriptstyle{#1}}%
{\XXint\scriptscriptstyle\scriptscriptstyle{#1}}%
\!\int}
\def\XXint#1#2#3{{\setbox0=\hbox{$#1{#2#3}{\int}$ }
\vcenter{\hbox{$#2#3$ }}\kern-.6\wd0}}

\def\dashint{\Xint-}

\newcommand{\jump}[1]{\lbrack\!\lbrack #1 \rbrack\!\rbrack}
\newcommand{\lump}[1]{\lbrace\skew{-14.7}\lbrace\!\!#1\!\!\skew{14.7}\rbrace\rbrace}

\newcommand{\bc}{boundary condition}

\newcommand{\rhs}{right-hand side}
\newcommand{\lhs}{left-hand side}
\newcommand{\mc}{microstructure}

\newcommand{\nbh}{neighborhood}
\newcommand{\IFF}{if and only if }



\newcommand{\Gd}{\delta}
\newcommand{\Ge}{\epsilon}

\newcommand{\Gve}{\varepsilon}

\newcommand{\Gl}{\lambda}

\newcommand{\Gth}{\theta}

\newcommand{\Go}{\omega}

\newcommand{\Gz}{\zeta}
\newcommand{\GD}{\Delta}

\newcommand{\GG}{\Gamma}

\newcommand{\GS}{\Sigma}

\newcommand{\GO}{\Omega}

\bmdefine\BGa{\alpha}
\bmdefine\BGb{\beta}
\bmdefine\BGd{\delta}
\bmdefine\BGe{\epsilon}
\bmdefine\BGve{\varepsilon}
\bmdefine\BGf{\phi}
\bmdefine\BGvf{\varphi}
\bmdefine\BGg{\gamma}
\bmdefine\BGc{\chi}
\bmdefine\BGi{\iota}
\bmdefine\BGk{\kappa}
\bmdefine\BGl{\lambda}
\bmdefine\BGn{\eta}
\bmdefine\BGm{\mu}
\bmdefine\BGv{\nu}
\bmdefine\BGp{\pi}
\bmdefine\BGth{\theta}
\bmdefine\BGvth{\vartheta}
\bmdefine\BGr{\rho}
\bmdefine\BGvr{\varrho}
\bmdefine\BGs{\sigma}
\bmdefine\BGvs{\varsigma}
\bmdefine\BGt{\tau}
\bmdefine\BGj{\tau}
\bmdefine\BGu{\upsilon}
\bmdefine\BGo{\omega}
\bmdefine\BGx{\xi}
\bmdefine\BGy{\psi}
\bmdefine\BGz{\zeta}
\bmdefine\BGD{\Delta}
\bmdefine\BGF{\Phi}
\bmdefine\BGG{\Gamma}
\bmdefine\BGL{\Lambda}
\bmdefine\BGP{\Pi}
\bmdefine\BGT{\Theta}
\bmdefine\BGS{\Sigma}
\bmdefine\BGU{\Upsilon}
\bmdefine\BGO{\Omega}
\bmdefine\BGX{\Xi}
\bmdefine\BGY{\Psi}

\newcommand{\CA}{{\mathcal A}}

\newcommand{\CC}{{\mathcal C}}

\newcommand{\CM}{{\mathcal M}}

\newcommand{\CO}{{\mathcal O}}
\newcommand{\CP}{{\mathcal P}}

\bmdefine\BCA{{\mathcal A}}
\bmdefine\BCB{{\mathcal B}}
\bmdefine\BCC{{\mathcal C}}
\bmdefine\BCD{{\mathcal D}}
\bmdefine\BCE{{\mathcal E}}
\bmdefine\BCF{{\mathcal F}}
\bmdefine\BCG{{\mathcal G}}
\bmdefine\BCH{{\mathcal H}}
\bmdefine\BCI{{\mathcal I}}
\bmdefine\BCJ{{\mathcal J}}
\bmdefine\BCK{{\mathcal K}}
\bmdefine\BCL{{\mathcal L}}
\bmdefine\BCM{{\mathcal M}}
\bmdefine\BCN{{\mathcal N}}
\bmdefine\BCO{{\mathcal O}}
\bmdefine\BCP{{\mathcal P}}
\bmdefine\BCQ{{\mathcal Q}}
\bmdefine\BCR{{\mathcal R}}
\bmdefine\BCS{{\mathcal S}}
\bmdefine\BCT{{\mathcal T}}
\bmdefine\BCU{{\mathcal U}}
\bmdefine\BCV{{\mathcal V}}
\bmdefine\BCW{{\mathcal W}}
\bmdefine\BCX{{\mathcal X}}
\bmdefine\BCY{{\mathcal Y}}
\bmdefine\BCZ{{\mathcal Z}}

\bmdefine\Bzr{ 0}
\bmdefine\Ba{ a}
\bmdefine\Bb{ b}
\bmdefine\Bc{ c}
\bmdefine\Bd{ d}
\bmdefine\Be{ e}
\bmdefine\Bf{ f}
\bmdefine\Bg{ g}
\bmdefine\Bh{ h}
\bmdefine\Bi{ i}
\bmdefine\Bj{ j}
\bmdefine\Bk{ k}
\bmdefine\Bl{ l}
\bmdefine\Bm{ m}
\bmdefine\Bn{ n}
\bmdefine\Bo{ o}
\bmdefine\Bp{ p}
\bmdefine\Bq{ q}
\bmdefine\Br{ r}
\bmdefine\Bs{ s}
\bmdefine\Bt{ t}
\bmdefine\Bu{ u}
\bmdefine\Bv{ v}
\bmdefine\Bw{ w}
\bmdefine\Bx{ x}
\bmdefine\By{ y}
\bmdefine\Bz{ z}
\bmdefine\BA{ A}
\bmdefine\BB{ B}
\bmdefine\BC{ C}
\bmdefine\BD{ D}
\bmdefine\BE{ E}
\bmdefine\BF{ F}
\bmdefine\BG{ G}
\bmdefine\BH{ H}
\bmdefine\BI{ I}
\bmdefine\BJ{ J}
\bmdefine\BK{ K}
\bmdefine\BL{ L}
\bmdefine\BM{ M}
\bmdefine\BN{ N}
\bmdefine\BO{ O}
\bmdefine\BP{ P}
\bmdefine\BQ{ Q}
\bmdefine\BR{ R}
\bmdefine\BS{ S}
\bmdefine\BT{ T}
\bmdefine\BU{ U}
\bmdefine\BV{ V}
\bmdefine\BW{ W}
\bmdefine\BX{ X}
\bmdefine\BY{ Y}
\bmdefine\BZ{ Z}



\title{Normality condition in elasticity}
\author{Yury Grabovsky \and Lev Truskinovsky}

\begin{document}
\maketitle
\begin{abstract}
  Strong local minimizers with surfaces of gradient discontinuity appear in
  variational problems when the energy density function is not rank-one
  convex.  In this paper we show that stability of such surfaces is related to
  stability outside the surface via a single jump relation that can be
  regarded as interchange stability condition.  Although this relation appears
  in the setting of equilibrium elasticity theory, it is remarkably similar to
  the well known \emph{normality} condition which plays a central role in the
  classical plasticity theory.
\end{abstract}

\section{Introduction}
\setcounter{equation}{0}
\label{sec:intro}
In the studies of necessary conditions for singular minimizers containing
surfaces of gradient discontinuity various local jump conditions have been
proposed. A partial list of such conditions include Weierstrass-Erdmann
relations (traction continuity and Maxwell condition) \cite{erdm1877,eshe70},
quasi-convexity on phase boundary \cite{gurtin83,bama84}, Grinfeld instability
condition \cite{grinf82,gkt10} and roughening instability condition
\cite{grtrpe}. While some of these conditions have been known for a long time,
a systematic study of their \emph{interdependence} have not been conducted,
and a full understanding of which conditions are primary and which are
derivative is still missing.

The absence of hierarchy is mostly due to the fact that strong and weak local
minima have to be treated differently and that variations leading to some of
the known necessary conditions represent an intricate \emph{combination} of
strong and weak perturbations. In particular, if the goal is to find local
necessary conditions of a \emph{strong local minimum}, the use of weak
variations gives rise to redundant information. For instance, Euler-Lagrange
equations in the weak form should not be a part of the minimal (essential)
local description of strong local minima.

In this paper, we study strong local minimizers and our goal is to derive an
\emph{irreducible} set of necessary conditions at a point of discontinuity by
using only ``purely'' strong variations of the interface that are
complementary to the known strong variations at nonsingular points
\cite{ball7677}. More specifically, our main theorem states that all known
local conditions associated with gradient discontinuities follow from
quasi-convexity on both sides of the discontinuity plus a single interface
inequality which we call the \emph{interchange stability} condition. While
this condition is fully explicit and deceptively simple, to the knowledge of
the authors, it has not been specifically singled out, except for a cursory
mention by R. Hill \cite{hill86} of the corresponding \emph{equality} which we
call \emph{elastic normality condition}. To emphasize a relation between this
condition and strong variations we show that it is responsible for G\^ateaux
differentiability of the energy functional along special multiscale
``directions''. We call them \emph{material interchange} variations and show
that they are devoid of any weak components.  We also explain why the elastic
normality condition, which R. Hill associated exclusively with weak
variations, plays such an important role in the study of strong local minima.

The paper is organized as follows. In Section~\ref{sec:prelim} we introduce
the  interchange stability  condition and formulate our main result. In
Section~\ref{sec:norm} we link interchange stability with a strong variation
which we interpret as  material exchange. We then interpolate
between this strong variation and a special weak variation which independently produces the
normality condition. Our main theorem is proved in Section~\ref{sec:proof},
where we also establish the inter-dependencies between the known local
necessary conditions of strong local minimum. An illustrative example of
locally stable interfaces associated with simple laminates is discussed in
detail in Section~\ref{sec:ex}. In Section~\ref{sec:normality} we build a link between the notions of elastic and plastic normality and show in which sense the elastic normality condition can be interpreted as the actual orthogonality with respect to an appropriately defined ``yield'' surface.
We then illustrate the general construction by studying the case of an anti-plane shear in isotropic material with a double-well energy.


\section{Preliminaries}
\setcounter{equation}{0}
\label{sec:prelim}
Consider the variational functional most readily associated with continuum elasticity theory
\begin{equation}
  \label{energy}
E(\By)=\int_{\GO}U(\Grad\By(\Bx))d\Bx-\int_{\dOm_{N}}(\Bt(\Bx),\By)dS(\Bx).
\end{equation}
Here $\GO$ is an open subset of $\bb{R}^{d}$, and
$\dOm_{N}$ is the Neumann part of the boundary. We can absorb
the boundary integral into the volume integral by finding a divergence-free
$m\times d$ matrix field $\BGt(\Bx)$, such that $\Bt=\BGt\Bn$ on $\dOm_{N}$
which suggests that the variational functional
\begin{equation}
  \label{genlagr}
E(\By)=\int_{\GO}L(\Bx,\By(\Bx),\Grad\By(\Bx))d\Bx
\end{equation}
can be used in place of (\ref{energy}). We assume that $L(\Bx,\By,\BF)$ is a continuous and bounded from
below function on $\bra{\GO}\times\bb{R}^{m}\times\bb{M}$, where $\bb{M}$ is
the set of all $m\times d$ matrices.

  We   use the following definition
 of strong local minimum:
\begin{definition}
  \label{def:slm}
The Lipschitz function $\By:\GO\to\bb{R}^{m}$ satisfying  boundary conditions
is a  strong local minimizer\footnote{This definition differs
  from the classical one by a more restrictive choice of variations $\BGf$. In
the study of local necessary conditions in the interior this difference is irrelevant.}
if there exists $\Gd>0$ so that for every $\BGf\in
C_{0}^{1}(\GO;\bb{R}^{m})$ for which $\max_{\Bx\in\GO}|\BGf|<\Gd$, we have
$E(\By+\BGf)\ge E(\By)$.
\end{definition}
In this paper we focus on special singular local minimizers containing jump discontinuity of $\Grad\By(\Bx)$
across a $C^{1}$ surface $\GS\subset\GO$.  Then for every point $\Bx\in\GS$ there
exist $m\times d$ matrices $\BF_{+}(\Bx)$ and $\BF_{-}(\Bx)$ such that for any
$\Bz\in\bb{R}^{d}$
\begin{equation}
  \label{Fpmdef}
  \lim_{\Ge\to 0}\Grad\By(\Bx+\Ge\Bz)=\bar{\BF}(\Bz)=\left\{
    \begin{array}{ll}
      \BF_{+}(\Bx),&\text{ if }\Bz\cdot\Bn>0\\
      \BF_{-}(\Bx),&\text{ if }\Bz\cdot\Bn<0.
    \end{array}\right.
\end{equation}
where $\Bn=\Bn(\Bx)$ is the unit normal\footnote{The choice of the orientation
  of the unit normal is unimportant, as long as it is smooth. By convention,
  the unit normal points into the region labeled ``$+$''.} to $\GS$. We
further assume that $\By\in C^{2}(\bra{\GO}\setminus\GS;\bb{R}^{m})$ which imposes
kinematic compatibility constraint on the jump of the deformation gradient
\cite{hada08}:
\begin{equation}
  \label{kincomp}
  \jump{\BF}=\Ba\otimes\Bn,
\end{equation}
where
\[
\jump{\BF}=\BF_{+}(\Bx)-\BF_{-}(\Bx),\qquad\Bx\in\GS,
\]
and $\Ba=\Ba(\Bx)\in\bb{R}^{m}$ is a called a shear vector.

Material stability of the deformation $\By(\Bx)$ at point $\Bx_{0}$ is understood as stability with
respect to local variations of the form
\begin{equation}
  \label{strong}
  \By(\Bx)\mapsto\By(\Bx)+\Ge\BGf\left(\frac{\Bx-\Bx_{0}}{\Ge}\right),\qquad
\BGf\in C_{0}^{\infty}(B,\bb{R}^{m}),
\end{equation}
where $B$ is the unit ball\footnote{The test function $\BGf$ can be supported in
any bounded domain of $\bb{R}^{d}$, see \cite{ball7677}}. The corresponding energy variation $\Gd E(\BGf)$ is defined by
\begin{equation}
  \label{strvar}
  \Gd E(\BGf)=\lim_{\Ge\to 0}
\frac{E\left(\By(\Bx)+\Ge\BGf\left(\dfrac{\Bx-\Bx_{0}}{\Ge}\right)\right)-E(\By)}{\Ge^{d}}.
\end{equation}
The condition of material stability can be written in different forms for points where $\Grad\By(\Bx)$ is continuous and for points
on jump discontinuity where $\Grad\By(\Bx)$ satisfies (\ref{Fpmdef}).
The condition of material stability in the regular point  is obtained by changing
variables $\Bx=\Bx_{0}+\Ge\Bz$ in (\ref{strvar}), \cite{ball7677}
\begin{equation}
  \label{qcxscale}
\Gd E(\BGf)=\int_{B}\{L(\Bx_{0},\By(\Bx_{0}),\Grad\By(\Bx_{0})+\Grad\BGf(\Bz))-
L(\Bx_{0},\By(\Bx_{0}),\Grad\By(\Bx_{0}))\}d\Bz.
\end{equation}
To be closer to standard notations we redefine
$W(\BF)=L(\Bx_{0},\By(\Bx_{0}),\BF)$ and write the  necessary condition of
material stability in the form  of quasi-convexity condition \cite{ball02}
\begin{equation}
  \label{matstab}
  \dashint_{B}W(\Grad\By(\Bx_{0})+\Grad\BGf(\Bz))d\Bz\ge W(\Grad\By(\Bx_{0})),
\end{equation}
where $\dashint_{B}$ denotes the average over $B$.
  We say that $\BF\in\bb{M}$ is \emph{strongly locally stable}  if
  \begin{equation}
    \label{qcx}
 \dashint_{B}W(\BF+\Grad\BGf(\Bz))d\Bz\ge W(\BF)
  \end{equation}
for any $\BGf\in C_{0}^{\infty}(B,\bb{R}^{m})$.

When the point
$\Bx_{0}$ lies at the jump discontinuity we can again change variables
$\Bx=\Bx_{0}+\Ge\Bz$ and write \cite{gurtin83}
\begin{equation}
  \label{qcxjump}
\Gd E(\BGf)=\int_{B}\{W(\bar{\BF}(\Bz)+\Grad\BGf(\Bz))-W(\bar{\BF}(\Bz))\}d\Bz,
\end{equation}
where $\bar{\BF}(\Bz)$ is defined in (\ref{Fpmdef}).
The associated necessary condition can be then written in the form of quasiconvexity on the surface of jump discontinuity condition \cite{gurtin83,bama84}
\begin{equation}
  \label{jdstab}
  \dashint_{B^{+}_{\Bn}}W(\BF_{+}(\Bx_{0})+\Grad\BGf)d\Bz
+\dashint_{B^{-}_{\Bn}}W(\BF_{-}(\Bx_{0})+\Grad\BGf)d\Bz\ge W(\BF_{+}(\Bx_{0}))+W(\BF_{-}(\Bx_{0})),
\end{equation}
where $B^{\pm}_{\Bn}=\{\Bz\in B:\Bz\cdot\Bn\gtrless 0\}$. We say that the pair $\BF_{\pm}\in\bb{M}$ satisfying (\ref{kincomp}) determines a
\emph{strongly locally stable  interface} $\Pi_{\Bn} =\{\Bz\in\bb{R}^{d}:\Bz\cdot\Bn=0\}$ if
\begin{equation}
  \label{jdqcx}
  \dashint_{B^{+}_{\Bn}}W(\BF_{+}+\Grad\BGf)d\Bz
+\dashint_{B^{-}_{\Bn}}W(\BF_{-}+\Grad\BGf)d\Bz\ge W(\BF_{+})+W(\BF_{-}),
\end{equation}
for any $\BGf\in C_{0}^{\infty}(B,\bb{R}^{m})$. It is clear that the strong local stability of the interface $\Pi_{\Bn}$
implies strong local stability (\ref{qcx}) of $\BF_{+}$ and $\BF_{-}$.

It will be convenient to reformulate conditions of strong local stability in terms of
the properties of global minimizers of localized variational problems. Thus,
 according to (\ref{qcx}), $\BF$ is strongly locally stable \IFF $\By(\Bz)=\BF\Bz$ is a
minimizer in the localized variational problem
\begin{equation}
  \label{homloc}
\inf_{\substack{\By|_{\Md B}=\BF\Bz\\ \By\in W^{1,\infty}(B;\bb{R}^{m})}}\dashint_{B}W(\Grad\By(\Bz))d\Bz.
\end{equation}
The value of the infimum in (\ref{homloc}) coincides with the quasiconvex
envelope $QW(\BF)$ of $W(\BF)$, i.e. the largest quasiconvex function that does not
exceed $W$ \cite{daco82}. It is then clear that
$\BF\in\bb{M}$ is strongly locally stable \IFF
$QW(\BF)=W(\BF)$.

Similarly we say that the pair $\BF_{\pm}$ satisfying (\ref{kincomp})   determines a strongly locally stable
interface if $\bar{\By}(\Bz)$ solves
the localized variational problem
\begin{equation}
  \label{jdloc}
\inf_{\substack{\By|_{\Md B}=\bar{\By}(\Bz)\\ \By\in W^{1,\infty}(B;\bb{R}^{m})}}
\int_{B}W(\Grad\By(\Bz))d\Bz.
\end{equation}
where we defined a  Lipschitz continuous function
\begin{equation}
  \label{ybardef}
  \bar{\By}(\Bz)=\left\{
    \begin{array}{ll}
      \BF_{+}\Bz,&\text{ if }\Bz\cdot\Bn>0,\\
      \BF_{-}\Bz,&\text{ if }\Bz\cdot\Bn<0.
    \end{array}\right.
\end{equation}

We are now in a position to formulate our main claim that strong
local stability (\ref{qcx}) of $\BF_{\pm}$ together with a single additional condition, which we call interchange stability,  implies strong local stability of the interface:
\begin{theorem}
  \label{th:main}
Let $W(\BF)$ be a continuous, bounded from below function that is of class
$C^{2}$ in a \nbh\ of $\{\BF_{+},\BF_{-}\}\subset\bb{M}$.
Assume that the pair $\BF_{\pm}$ satisfies the kinematic compatibility condition $\jump{\BF}=\Ba\otimes\Bn$ for some $\Ba\in\bb{R}^{m}$ and
$\Bn\in\bb{S}^{d-1}$. Then the surface of jump discontinuity
$\Pi_{\Bn}=\{\Bz\in\bb{R}^{d}:\Bz\cdot\Bn=0\}$ is strongly locally stable \IFF
the following conditions are satisfied:
\begin{itemize}
\item[(S)] Material stability in the bulk: $QW(\BF_{\pm})=W(\BF_{\pm})$,
\item[(I)] Interchange stability:
  $\mathfrak{N}=(\jump{W_{\BF}},\jump{\BF})\le 0$.
\end{itemize}
\end{theorem}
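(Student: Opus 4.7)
My plan is to prove the biconditional in two directions.

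For necessity $(\Rightarrow)$, assume $\Pi_\Bn$ is strongly locally stable. I would first obtain (S) by inserting into \eqref{jdqcx} test functions $\BGf\in C_0^\infty(B;\bb R^m)$ that are compactly supported inside $B_\Bn^+$, so that $\Grad\BGf\equiv 0$ on $B_\Bn^-$. Then the second average collapses to $W(\BF_-)$ and we are left with $\dashint_{B_\Bn^+}W(\BF_++\Grad\BGf)\,d\Bz\ge W(\BF_+)$; a standard rescaling/translation argument inside $B_\Bn^+$ promotes this to the full quasiconvexity inequality \eqref{qcx} at $\BF_+$, hence $QW(\BF_+)=W(\BF_+)$, and the analogous choice in $B_\Bn^-$ delivers the statement at $\BF_-$. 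To obtain (I), I use the one-parameter family of admissible Lipschitz ``material interchange'' competitors $\By_\eps$ introduced in Section~\ref{sec:norm}: an infinitesimal shift of $\Pi_\Bn$ by $\eps h(\Bz_\parallel)\Bn$ (with $h\in C_c^\infty(\Pi_\Bn\cap B)$, $h\ge 0$) compensated, via the Hadamard rigidity $\jump{\BF}=\Ba\otimes\Bn$, by a matching shear of order $\eps$ in the displaced phase. Because $\|\By_\eps-\bar\By\|_\infty=O(\eps)$ and $\By_\eps$ agrees with $\bar\By$ on $\partial B$, the strong-minimizer inequality $E(\By_\eps)\ge E(\bar\By)$ for small $\eps>0$ provides a one-sided Gateaux inequality whose leading coefficient, obtained by balancing the sliver's bulk contribution $W(\BF_-)-W(\BF_+)$ against the first-order correction coming from the compensating shear, equals $\mathfrak N$ up to a positive multiplicative factor; thus $\mathfrak N\le 0$.

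For sufficiency $(\Leftarrow)$, assuming (S) and (I), let $\BGf\in C_0^\infty(B;\bb R^m)$ be any admissible perturbation. I begin with the algebraic decomposition $W(\BF+\BG)-W(\BF)=W_\BF(\BF)\!:\!\BG+\mathcal E^W(\BF,\BG)$, where $\mathcal E^W$ is the Weierstrass excess; integrating over $B$ and applying the divergence theorem separately on each half-ball (using the constancy of $W_\BF(\BF_\pm)$ and the vanishing of $\BGf$ on $\partial B$) yields
\[
E(\bar\By+\BGf)-E(\bar\By)=-\!\int_{\Pi_\Bn\cap B}\!(\jump{W_\BF}\Bn)\cdot\BGf\,dS\;+\;\sum_{\pm}\int_{B_\Bn^\pm}\!\mathcal E^W(\BF_\pm,\Grad\BGf)\,d\Bz.
\]
I would then estimate the bulk excess integrals from below using (S), after lifting the interface trace $v:=\BGf|_{\Pi_\Bn}$ off $\Pi_\Bn$ via smooth extensions $V_\pm$ on $B_\Bn^\pm$ adapted to the rank-one direction $\Ba\otimes\Bn$ and applying the quasiconvexity of $W$ at $\BF_\pm$ to the residues $\BGf-V_\pm\in W^{1,\infty}_0(B_\Bn^\pm)$. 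The construction is organized so that the bulk contributions generated by the lifts $V_\pm$ and the linear interface term recombine into the scalar pairing $(\jump{W_\BF},\jump{\BF})=\mathfrak N$ against a nonnegative weight, and hypothesis (I) then forces the whole expression to be $\ge 0$.

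The delicate point is the sufficiency direction, specifically reducing the linear interface functional $\BGf\mapsto-\int(\jump{W_\BF}\Bn)\cdot\BGf\,dS$, which a priori can be made of either sign by varying $\BGf|_{\Pi_\Bn}$, to the one-dimensional scalar $\mathfrak N$ that (I) controls. The structural fact that unlocks this reduction is the Hadamard rigidity $\jump{\BF}=\Ba\otimes\Bn$: only the component of the interface trace along $\Ba$ represents a genuine material-interchange perturbation that cannot be realized by admissible bulk variations on either side of $\Pi_\Bn$, while components transverse to $\Ba$ can be absorbed by quasiconvexity (S) on each half-ball. Making this decoupling rigorous, and showing that the $\mathcal E^W$ terms absorb the second-order remainders introduced by the lifts $V_\pm$ — the step where the $C^2$ regularity of $W$ in a neighborhood of $\{\BF_\pm\}$ is used — is the main technical hurdle.
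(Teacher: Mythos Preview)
Your necessity argument is fine and matches the paper's. The gap is in sufficiency.

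Your plan hinges on controlling the interface term $-\int_{\Pi_\Bn\cap B}(\jump{\BP}\Bn)\cdot\BGf\,dS$ by ``reducing it to the scalar $\mathfrak N$'' via a lift-and-residue decomposition. There are two problems. First, you do not observe that (S) and (I) together already force $\jump{\BP}\Bn=\Bzr$: condition (S) implies the Weierstrass inequality $W^{\circ}(\BF_\pm,\Bu\otimes\Bv)\ge 0$, and taking $\Bu\otimes\Bv=\mp\jump{\BF}$ yields $\mathfrak N\ge 2|p^{*}|\ge 0$; combined with (I) this gives $\mathfrak N=0$, $p^{*}=0$, and then a first-order Taylor expansion of $W^{\circ}(\BF_\pm,\mp(\Ba+\BGx)\otimes(\Bn+\BGn))\ge 0$ in $(\BGx,\BGn)$ delivers $\jump{\BP}\Bn=\Bzr$ and $\jump{\BP}^{T}\Ba=\Bzr$. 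So the interface term you are trying to tame is in fact identically zero under the hypotheses.

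Second, and more seriously, even after the interface term is removed you are not done: what remains is exactly the inequality \eqref{jdqcx} you set out to prove, so the Weierstrass decomposition has not advanced the argument. Your lift-and-residue idea does not close the gap: applying quasiconvexity of $W$ at $\BF_\pm$ to $\BGf-V_\pm\in W_0^{1,\infty}(B_\Bn^\pm)$ controls $\int W(\BF_\pm+\Grad(\BGf-V_\pm))$, not $\int W(\BF_\pm+\Grad\BGf)$, and since the excess $\CE^{W}$ is nonlinear in its second slot there is no useful additive splitting; moreover $\Grad V_\pm$ cannot be made constant (it must reproduce the arbitrary trace $\BGf|_{\Pi_\Bn}$), so shifting the base point of quasiconvexity is not an option either. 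The paper's route is entirely different. It first establishes the affine formula $QW(t\BF_++(1-t)\BF_-)=tW(\BF_+)+(1-t)W(\BF_-)$ on $[0,1]$ from rank-one convexity of $QW$, condition (S), and the vanishing of $p^{*}$ and $\mathfrak N$ obtained above. Then it applies \emph{periodic} quasiconvexity of $QW$ at the midpoint $\lump{\BF}$ to the periodic test function $\psi(\Bz\cdot\Bn)\Ba+\BGf_{\rm per}$ (the saw-tooth laminate plus a periodized extension of $\BGf$), giving $\lump{W}=QW(\lump{\BF})\le\dashint_{Q_\Bn}W(\bar{\BF}_{\rm per}+\Grad\BGf_{\rm per})$ and hence \eqref{jdqcx}. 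Passing through $QW$ is precisely what lets one handle a test function that does not vanish on $\Pi_\Bn$.
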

Before we turn to the  proof of Theorem~\ref{th:main} it is instructive to look closely at the meaning of the scalar quantity $\mathfrak{N}$ entering the  algebraic condition ($I$).
\begin{figure}[t]
  \centering
  \includegraphics[scale=0.5]{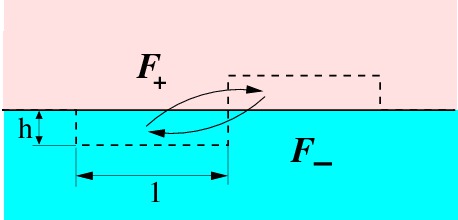}
  \caption{Interchange variation at the surface of gradient discontinuity.}
  \label{fig:interchange}
\end{figure}


\section{Interchange driving force}
\setcounter{equation}{0}
\label{sec:norm}

While it is natural  that condition (\ref{jdqcx}) of strong local stability
of the interface implies strong
local stability of each individual deformation gradient $\BF_{+}$ and 
$\BF_{-}$,  a less obvious claim of Theorem~\ref{th:main}  is that the only \emph{joint} stability constraint on the
kinematically compatible pair $(\BF_{+},\BF_{-})$ is provided by condition ($I$). A natural challenge is then to identify the variation producing this condition.

We observe that conventional variations, linking both sides of a jump discontinuity and leading to Maxwell condition \cite{eshe70} or  roughening instability condition \cite{grtrpe}, represent combinations of weak and strong variations.  This creates unnecessary coupling and obscures the strong character of the minimizer under consideration. Physically it is clear that if ''materials'' on both sides of the interface are stable and if we can interchange one ''material'' by another without increasing the energy, then the whole configuration should be stable. 

\begin{figure}[t]
  \centering
  \includegraphics[scale=0.3]{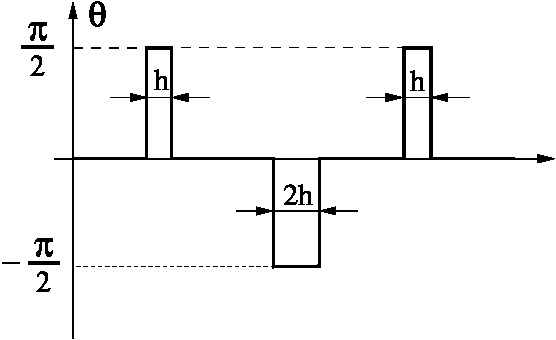}
  \caption{Strong double-dipole variation of the interface \emph{normal}. The angle
    $\Gth$ between the original and perturbed normals is plotted as a function
    of length in the tangential direction.}
  \label{fig:normalnuc}
\end{figure}

The idea of material interchange is illustrated schematically in
  Fig.~\ref{fig:interchange} where the two adjacent rectangular domains are
  flipped and then translated. At $h \rightarrow 0$ this construction can be viewed as a interface
generalization of the Weierstrass "needle variation" since neither the fields are modified, except on a set of zero surface area. As we show in
Fig.~\ref{fig:normalnuc} this variation can be also interpreted as a strong
variation of the interface normal.

Notice that if taken literally, the schematics of perturbed field shown in
Fig.~\ref{fig:interchange} is incompatible with a gradient of any admissible
variation. To fix this technical problem we present below an explicit construction   of the   variation whose
gradient  differs significantly from the one shown in
Fig.~\ref{fig:interchange} only on a set of an infinitesimal measure.

We define a family of Lipschitz
cut-off functions $\{\Gz_{h}(t):h\in(0,1)\}$ on $[0,+\infty)$ such that
$\Gz_{h}(t)=1$, when $0\le t\le 1-\sqrt{h}$, while $\Gz_{h}(t)=0$, when $t\ge
1$. Let $\rho(t)$ be another Lipschitz cut-off function with $\rho(t)=1$, when
$t>1$ and $\rho(t)=0$, when $t<0$. Suppose that $\BGn$ is a unit vector in
$\bb{R}^{d}$, such that $\BGn\perp\Bn$. We then define the test function
$\BGF_{h}(\Bz)$, to be used in (\ref{jdstab}), as follows
\begin{equation}
  \label{Wdipole}
\BGF_{h}=\BGF_{h}^{+}+\BGF_{h}^{-},\quad\BGF_{h}^{-}(\Bz)=\BGF_{h}^{+}(-\Bz),\quad
\BGF^{+}_{h}(\Bz)=h\phi\left(\frac{\Bz\cdot\Bn}{h}\right)\rho\left(\frac{\Bz\cdot\BGn}{\sqrt{h}}\right)\Gz_{h}(|\Bz|)\Ba,
\end{equation}
where
\[
\phi(s)=
\begin{cases}
  1-s,&0<s<1,\\
1,&s\le 0,\\
0,&s\ge 1.
\end{cases}
\]

\begin{figure}[t]
\begin{subfigure}[t]{3in}
  \centering
  \includegraphics[scale=0.3]{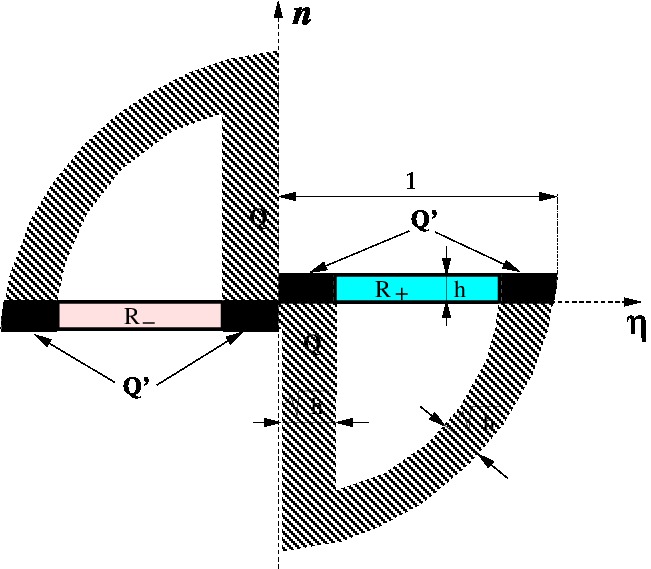}
  \caption{Value map of $|\Grad\Phi_{h}|$ in the $(\BGn,\Bn)$-plane.}
\label{fig:regions}
\end{subfigure}\hspace{5ex}
\begin{subfigure}[t]{3in}
  \centering
  \includegraphics[scale=0.5]{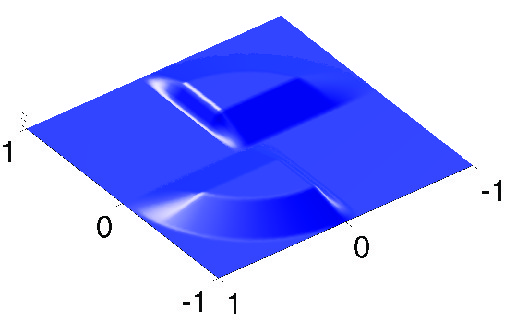}
  \caption{Graph of $\Phi_h(\Bz)$ over the $(\BGn,\Bn)$-plane.}
  \label{fig:interchange2}
\end{subfigure}
\caption{Interchange variation $\BGF_{h}(\Bz)=\Phi_{h}(\Bz)\Ba$.}
\end{figure}
Observe that $\BGF_{h}(\Bz)=\Phi_{h}(\Bz)\Ba$, where the graph of
$\Phi_{h}(\Bz)$ is given in Figure~\ref{fig:interchange2}.
We remark that the variation (\ref{Wdipole})  belongs to the class of
multiscale variations  proposed in \cite{grtrpe}: it uses a small scale $h$ and another small scale  $\Ge$  from (\ref{strong}). 

The interpretation of the function $\mathfrak{N}$ as an "interchange driving force" is immediately clear from the following theorem:

\begin{theorem}
  \label{th:Wdipole0}
Suppose $\{\BF_{+},\BF_{-}\}\subset\bb{M}$ satisfy (\ref{kincomp}).
Let $\BGF_{h}$ be given by (\ref{Wdipole}). Then
\begin{equation}
  \label{Wdipolevar}
\lim_{h\to 0}\nth{h}\int_{B}\{W(\Grad\bar{\By}+\Grad\BGF_{h}(\Bz))-W(\Grad\bar{\By})\}d\Bz=
-\frac{\Go_{d-1}}{2}\mathfrak{N},
\end{equation}
where $\bar{\By}(\Bz)$ is given by (\ref{ybardef}) and
$\Go_{k}=\pi^{k/2}/\GG(k/2+1)$ is the $k$ dimensional volume of the unit ball
in $\bb{R}^{k}$.
\end{theorem}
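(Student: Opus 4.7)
The plan is to use the exact decomposition
\[
W(\Grad\bar{\By}+\Grad\BGF_{h})-W(\Grad\bar{\By}) = \langle W_{\BF}(\Grad\bar{\By}),\Grad\BGF_{h}\rangle + R(\Grad\bar{\By},\Grad\BGF_{h}),
\]
where $R(\BF,\BG) := W(\BF+\BG)-W(\BF)-\langle W_{\BF}(\BF),\BG\rangle$, and to handle the resulting linear part $L_{h}$ and nonlinear remainder $N_{h}$ separately. The linear part will be evaluated by integration by parts, and the nonlinear remainder will be shown to concentrate on the two ``main slabs'' where the material interchange is realized exactly.

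For $L_{h}$, note that $\Grad\bar{\By}$ is piecewise constant on the half-balls $B^{\pm}=B\cap\{\pm\Bz\cdot\Bn>0\}$. Applying the divergence theorem on each $B^{\pm}$, using $\BGF_{h}|_{\partial B}=\Bzr$ (provided by the cutoff $\Gz_{h}$), only the internal interface $\Pi_{\Bn}\cap B$ contributes, with outward normals of opposite sign on the two sides. This gives
\[
L_{h} = -\langle\jump{W_{\BF}}\Bn,\Ba\rangle\int_{\Pi_{\Bn}\cap B}\Phi_{h}\,dS = -\mathfrak{N}\int_{\Pi_{\Bn}\cap B}\Phi_{h}\,dS.
\]
On $\{\Bz\cdot\Bn=0\}$ we have $\phi(0)=1$, so $\Phi_{h}(\Bz)=h\bigl[\rho(\xi/\sqrt{h})+\rho(-\xi/\sqrt{h})\bigr]\Gz_{h}(|\Bz|)$ with $\xi=\Bz\cdot\BGn$. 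Since the bracket converges pointwise to $1$ for $\xi\ne 0$ and $\Gz_{h}\to 1$ a.e., dominated convergence yields $\int_{\Pi_{\Bn}\cap B}\Phi_{h}\,dS = h\Go_{d-1}+o(h)$, and therefore $L_{h}/h \to -\Go_{d-1}\mathfrak{N}$.

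For $N_{h}$, let $S^{+}=\{0<\Bz\cdot\Bn<h,\ \Bz\cdot\BGn>\sqrt{h},\ |\Bz|<1-\sqrt{h}\}$ and $S^{-}=-S^{+}$. A direct inspection of (\ref{Wdipole}) shows $\Grad\BGF_{h}\equiv\mp\Ba\otimes\Bn$ on $S^{\pm}$, so the perturbed gradient coincides exactly with $\BF_{\mp}$ there --- this is the ``interchange''. Each slab has volume $\tfrac{1}{2}\Go_{d-1}h+O(h^{3/2})$, and
\[
R(\BF_{\pm},\mp\Ba\otimes\Bn) = W(\BF_{\mp})-W(\BF_{\pm}) \pm \langle W_{\BF}(\BF_{\pm})\Bn,\Ba\rangle.
\]
The $W(\BF_{\mp})-W(\BF_{\pm})$ pieces cancel when the contributions from $S^{+}$ and $S^{-}$ are added, leaving $\tfrac{1}{2}\Go_{d-1}h\mathfrak{N}+o(h)$. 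On the remaining portion of $\operatorname{supp}\Grad\BGF_{h}$, either the volume is $O(h^{3/2})$ with $R$ bounded (the ``corner'' region $\{0<|\Bz\cdot\Bn|<h,\ 0<|\Bz\cdot\BGn|<\sqrt{h}\}$), or $|\Grad\BGF_{h}|=O(\sqrt{h})$ on a set of volume $O(\sqrt{h})$ (the tangential transition slab away from the corner, and the radial cutoff shell $\{1-\sqrt{h}<|\Bz|<1\}$); in the latter case the $C^{2}$ hypothesis yields $|R|\le C|\Grad\BGF_{h}|^{2}=O(h)$. Each such contribution is $O(h^{3/2})=o(h)$, so $N_{h}/h \to \tfrac{1}{2}\Go_{d-1}\mathfrak{N}$.

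Combining, $(L_{h}+N_{h})/h \to -\Go_{d-1}\mathfrak{N}+\tfrac{1}{2}\Go_{d-1}\mathfrak{N} = -\tfrac{1}{2}\Go_{d-1}\mathfrak{N}$, the desired limit. The main obstacle is the careful bookkeeping of $\operatorname{supp}\Grad\BGF_{h}$: a direct region-by-region computation of the full integrand would force one to track order-$h$ contributions from both the tangential transition (via $\rho'$) and the radial cutoff (via $\Gz_{h}'$) separately. The linear/nonlinear split circumvents this by packaging all such contributions into a single clean surface integral on $\Pi_{\Bn}$, leaving a nonlinear remainder that feels only the bulk interchange on the two main slabs.
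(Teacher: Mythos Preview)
Your proof is correct and follows essentially the same route as the paper: the decomposition into a linear part and a Weierstrass remainder (your $R$ is the paper's $W^{\circ}$), the evaluation of the linear part as a surface integral on $\Pi_{\Bn}\cap B$ yielding $-h\Go_{d-1}\mathfrak{N}+o(h)$, and the region-by-region analysis of the remainder on the two main slabs $S^{\pm}$ (the paper's $R_{\pm}$) plus negligible transition sets. One small bookkeeping slip: the portion of the radial shell $\{1-\sqrt{h}<|\Bz|<1\}$ that also lies in $\{0<|\Bz\cdot\Bn|<h\}$ carries an $O(1)$ gradient (from $\phi'$), not $O(\sqrt{h})$; but since its volume is $O(h^{3/2})$ this piece still contributes $o(h)$, so the conclusion is unaffected.
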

\begin{proof}
In order to compute the energy increment
\begin{equation}
  \label{enincr}
\GD E(h)=\int_{B}\{W(\Grad\bar{\By}+\Grad\BGF_{h}(\Bz))-W(\Grad\bar{\By})\}d\Bz
\end{equation}
we use the Weierstrass function 
\begin{equation}
  \label{Wcirc}
  W^{\circ}(\BF,\BH)=W(\BF+\BH)-W(\BF)-(W_{\BF}(\BF),\BH).
\end{equation}
We can then rewrite the energy
increment $\GD E(h)$ as
\[
\GD E(h)=\int_{B}W^{\circ}(\Grad\bar{\By},\Grad\BGF_{h})d\Bz+\int_{B}(W_{\BF}(\Grad\bar{\By}),\Grad\BGF_{h})d\Bz.
\]
We easily compute
\begin{equation}
  \label{farfield}
\int_{B}(W_{\BF}(\Grad\bar{\By}),\Grad\BGF_{h})d\Bz=
-\left(\jump{\BP}\Bn,\int_{\Pi_{\Bn}\cap B}\BGF_{h}dS\right)=
-h\Go_{d-1}\mathfrak{N}+O(h^{3/2}).
\end{equation}
Observe that $\BGF^{+}_{h}(\Bz)$ is non-zero only on $\Bz\cdot\BGn>0$, while
$\BGF^{-}_{h}(\Bz)$ is non-zero only on $\Bz\cdot\BGn<0$. Therefore,
\[
\int_{B}W^{\circ}(\Grad\bar{\By},\Grad\BGF_{h})d\Bz=
\int_{B}W^{\circ}(\Grad\bar{\By},\Grad\BGF_{h}^{+})d\Bz+
\int_{B}W^{\circ}(\Grad\bar{\By},\Grad\BGF_{h}^{-})d\Bz.
\]
In order to estimate the \rhs, we identify 3 regions where
$\Grad\BGF_{h}\not=\Bzr$ (see Figure~\ref{fig:regions}):
\[
R_{\pm}=\{\Bz\in B: \pm\Bz\cdot\BGn>\sqrt{h},\ 0<\pm\Bz\cdot\Bn<h,\ |\Bz|<1-\sqrt{h}\},
\]
\begin{multline*}
Q=\{\Bz\in B:|\Bz|>1-\sqrt{h},\ (\Bz\cdot\BGn)(\Bz\cdot\Bn)<0\}\cup\\
\{\Bz\in B:|\Bz\cdot\BGn|<\sqrt{h},\ (\Bz\cdot\BGn)(\Bz\cdot\Bn)<0,\ |\Bz|<1-\sqrt{h}\},
\end{multline*}
and
\begin{multline*}
Q'=\{\Bz\in B:|\Bz\cdot\BGn|<\sqrt{h},\ |\Bz\cdot\Bn|<h,\
(\Bz\cdot\BGn)(\Bz\cdot\Bn)>0\}\cup\\
\{\Bz\in B:|\Bz|>1-\sqrt{h},\ |\Bz\cdot\Bn|<h,\ (\Bz\cdot\BGn)(\Bz\cdot\Bn)>0\}.
\end{multline*}
In order to estimate $\Grad\BGF_{h}$ we write
$\BGF_{h}^{\pm}=\BGf_{h}^{\pm}c_{h}^{\pm}$, where
\[
\BGf_{h}^{\pm}(\Bz)=h\phi\left(\pm\frac{\Bz\cdot\Bn}{h}\right)\Ba,\quad
c_{h}^{\pm}(\Bz)=\rho\left(\pm\frac{\Bz\cdot\BGn}{\sqrt{h}}\right)\Gz_{h}(|\Bz|).
\]
It is easy to see that
\[
|\BGf_{h}^{\pm}|=O(h),\quad|c_{h}^{\pm}|=O(1),\quad|\Grad\BGf_{h}^{\pm}|=O(1),\quad
|\Grad c_{h}^{\pm}|=O\left(\nth{\sqrt{h}}\right).
\]
We see that $c_{h}^{\pm}=1$ and $\Grad\BGf^{\pm}_{h}=\mp\jump{\BF}$
in regions $R_{\pm}$, $\BGf_{h}^{\pm}=h\Ba$ in the region $Q$. Thus
\[
\Grad\BGF_{h}^{\pm}(\Bz)=
\begin{cases}
  \mp\jump{\BF},&\Bz\in R_{\pm},\\
  h\Ba\otimes\Grad c_{h}^{\pm},&\Bz\in Q,\\
  c_{h}^{\pm}\Grad\BGf_{h}^{\pm}+\BGf_{h}^{\pm}\otimes\Grad c_{h}^{\pm},&\Bz\in Q',\\
  \Bzr,&\text{elsewhere}.
\end{cases}
\]
Thus, $\Grad\BGF_{h}=O(\sqrt{h})$ in the region $Q$ and $\Grad\BGF_{h}=O(1)$
in region $Q'$.  We also see that $|Q|=O(\sqrt{h})$, $|Q'|=O(h^{3/2})$, while
$|R_{\pm}|=h\Go_{d-1}/2+O(h^{3/2})$. Thus, we estimate
\[
\int_{Q\cup Q'}W^{\circ}(\Grad\bar{\By},\Grad\BGF_{h})d\Bz=O(h^{3/2}),
\]
while
\[
\int_{R_{\pm}}W^{\circ}(\Grad\bar{\By},\Grad\BGF_{h}^{\pm})d\Bz=\frac{h\Go_{d-1}}{2}
W^{\circ}(\BF_{\pm},\mp\jump{\BF})+O(h^{3/2}).
\]
We conclude that
\begin{equation}
  \label{locfield}
\lim_{h\to 0}\nth{h}\int_{B}W^{\circ}(\Grad\bar{\By},\Grad\BGF_{h})d\Bz=
\frac{\Go_{d-1}}{2}\left(W^{\circ}(\BF_{+},-\jump{\BF})+W^{\circ}(\BF_{-},\jump{\BF})\right)=
\frac{\Go_{d-1}}{2}\mathfrak{N}.
\end{equation}
Combining (\ref{farfield}) and (\ref{locfield}) we obtain (\ref{Wdipolevar}).
\end{proof}

While the Theorem~\ref{th:Wdipole0}  associates the interchange stability  ($I$) with strong variations, the function $\mathfrak{N}$ is also known to be linked with stability with respect to weak variations. Indeed, after being projected onto the shear vector $\Ba$, the
traction continuity condition $\jump{\BP}\Bn=0$,  which can be viewed as  a weak form of Euler-Lagrange equations, gives the following \emph{normality condition } \cite{hill86}
\begin{equation}
  \label{norm}
  \mathfrak{N}=(\jump{\BP},\jump{\BF})=0.
\end{equation}
To understand the origin of (\ref{norm}) consider the energy increment corresponding to classical
weak variations
\begin{equation}
  \label{weak}
\bar{\By}\mapsto\bar{\By}(\Bz)+\Ge\BGf(\Bz),\qquad\BGf\in C_{0}^{1}(B;\bb{R}^{m}).
\end{equation}
We obtain
\begin{equation}
  \label{tracvar}
\lim_{\Ge\to 0} \nth{\Ge}\int_{B}\{W(\Grad\bar{\By}+\Ge\Grad\BGf)-W(\Grad\bar{\By})\}d\Bz=
-\int_{\Pi_{\Bn}}(\jump{\BP}\Bn,\BGf)dS(\Bz).
\end{equation}
The formula (\ref{tracvar}) shows that if $\BGf(\Bz)=\Ba\phi(\Bz)$ then the
vanishing of the first variation implies the normality condition
(\ref{norm}). 

The crucial observation is  that our strong variation $\BGF_{h}$
given by (\ref{Wdipole}) is also a scalar multiple of $\Ba$. This suggests the
idea that our both  weak and strong variations can be regarded as two limits of a single
continuum of variations $\{t\BGF_{h}:t\in[0,1]\}$ interpolating between them.
Indeed, it is easy to see that if $t\to 0$ for
fixed $h$, the two-parameter family of functions $t\BGF_{h}$ converges to the
weak variation (\ref{weak}), and when $h\to 0$ at $t=1$,
we obtain the strong variation (\ref{Wdipole}).

To compute
 the corresponding asymptotics of the energy increment we can use the method used in the proof of
Theorem~\ref{th:Wdipole0} which is applicable for any $t>0$. Let
\[
\GD E(t,h)=\int_{B}(W(\Grad\bar{\By}+t\Grad\BGF_{h}(\Bz))-W(\Grad\bar{\By}))d\Bz.
\]
Rewriting the energy increment in terms of the  Weierstrass function we obtain
\[
\GD E(t,h)=\int_{B}W^{\circ}(\Grad\bar{\By},t\Grad\BGF_{h})d\Bz+
t\int_{B}(W_{\BF}(\Grad\bar{\By}),\Grad\BGF_{h})d\Bz.
\]
Thus,
\[
\lim_{t\to 0}\frac{\GD E(t,h)}{t}=\int_{B}(W_{\BF}(\Grad\bar{\By}),\Grad\BGF_{h})d\Bz.
\]
When $t>0$ is fixed we repeat the steps in the proof of Theorem~\ref{th:Wdipole0} and obtain
\[
\lim_{h\to 0}\nth{h}\int_{B}W^{\circ}(\Grad\bar{\By},t\Grad\BGF_{h})d\Bz=
\frac{\Go_{d-1}}{2}(W^{\circ}(\BF_{+},-t\jump{\BF})+W^{\circ}(\BF_{-},t\jump{\BF})).
\]
Hence, using the formula (\ref{farfield}) we obtain
\begin{equation}
  \label{hthent}
  \lim_{h\to 0}\frac{\GD E(t,h)}{h}=
\frac{\Go_{d-1}}{2}(W^{\circ}(\BF_{+},-t\jump{\BF})+W^{\circ}(\BF_{-},t\jump{\BF})-2t\mathfrak{N}),
\end{equation}
and
\[
\lim_{h\to 0}\lim_{t\to 0}\frac{\GD E(t,h)}{th}=-\Go_{d-1}\mathfrak{N}.
\]
From (\ref{hthent}) we obtain
\[
\lim_{t\to 0}\lim_{h\to 0}\frac{\GD E(t,h)}{th}=-\Go_{d-1}\mathfrak{N},
\]
which shows that the $h$ and $t$ limits commute.  

The existence of an explicit interpolation between weak and strong variations suggests to examine the behavior of the normalized energy along the connecting path. To this end, consider the expression
\[
D(t)=\lim_{h\to 0}\frac{2\GD E(t,h)}{h\Go_{d-1}}= W^{\circ}(\BF_{+},-t\jump{\BF})+W^{\circ}(\BF_{-},t\jump{\BF}) -2t\mathfrak{N}
\]
on the interval $[0,1]$. To ensure the symmetry of the two limits, we consider the special case $\mathfrak{N}=0$.

\begin{figure}[t]
  \centering
  \includegraphics[scale=0.3]{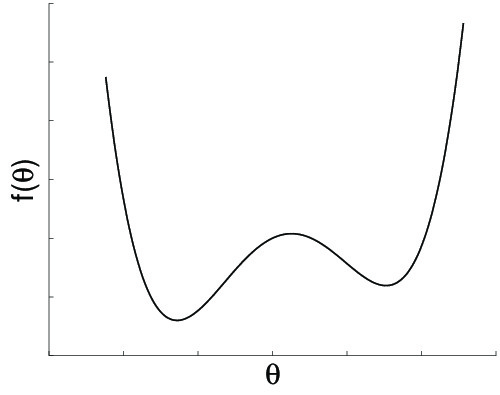}\hspace{10ex}
  \includegraphics[scale=0.3]{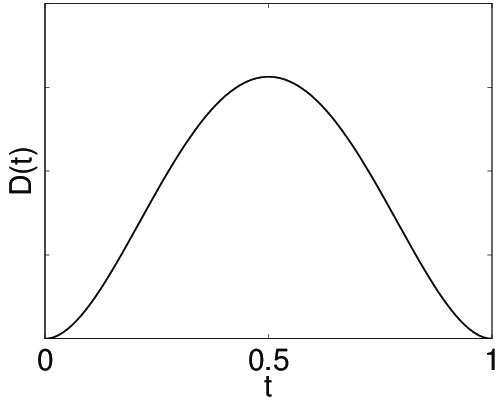}
  \caption{Connecting the weak ($t=0$) and the interchange ($t=1$) variation.}
  \label{fig:ws-interch}
\end{figure}

It is clear that the fine structure of the energy landscape along such a path is
not universal and depends sensitively on the function $W(\BF)$.
For the purpose of illustration, let us consider the energy density
\[
W(\BF)=f(\Gth)+\mu\left|\BGve-\frac{\Gth}{d}\BI\right|^{2},\qquad
\Gth=\Trc\BF,\quad\BGve=\frac{\BF+\BF^{T}}{2}.
\]
Assuming kinematic compatibility (\ref{kincomp}) and normality
(\ref{norm}) we obtain
\[
D(t)= f(\Gth_{t})+f(\Tld{\Gth}_{t})-f(\Gth_{+})-f(\Gth_{-}) +t(1-t)\jump{f'(\Gth)}\jump{\Gth},
\]
where
\[
\Gth_{t}=t\Gth_{+}+(1-t)\Gth_{-},\qquad\Tld{\Gth}_{t}=(1-t)\Gth_{+}+t\Gth_{-},
\]
with $\Gth_{\pm}$ satisfying
\[
\jump{\Phi'}\jump{\Gth}\le 0,\qquad\Phi(\Gth)=f(\Gth)+\mu\left(1-\nth{d}\right)\Gth^{2}.
\]
One can see that if the function $f(\Gth)$ has a ``double-well'' structure
(see Figure~\ref{fig:ws-interch}(a)) the graph of $D(t)$ looks like
$t^2(1-t)^{2}$, see Figure~\ref{fig:ws-interch}(b). The presence of ``energy barrier''
 indicates that any ``combination'' of the
interchange variation and the weak variation (\ref{weak}) produces a cruder
test of stability than either of the pure variations, incapable of detecting
the existing instability. This result confirms our intuition that the realms
of weak and strong variations are well separated and that the energy
landscapes in the strong and weak topologies can be regarded as unrelated
(unless all non-trivial features are removed by assuming uniform convexity or
quasiconvexity).

We conclude this section by proving an important property of the interchange driving force  $\mathfrak{N}$. More specifically, we show that if the deformation gradients $\BF_{\pm}$
are strongly locally stable and if they are
linked only by the kinematic compatibility condition (\ref{kincomp}), then
  the interchange driving force
$\mathfrak{N}$ is non-negative.

We first recall the definition of the Maxwell driving force
\cite{erdm1877,eshe70}
\begin{equation}
  \label{maxwelldf}
  p^{*}=\jump{W}-(\lump{\BP},\jump{\BF}),
\end{equation}
where $\lump{\BP}=\hf{(\BP_{+}+\BP_{-})}$.

\begin{theorem}
  \label{th:pospres}
Assume that both $\BF_{+}$ and $\BF_{-}$ are strongly
locally stable and satisfy
the kinematic compatibility condition (\ref{kincomp}). Then
\begin{equation}
  \label{normest}
  \mathfrak{N}\ge 2|p^{*}|.
\end{equation}
In particular, $\mathfrak{N}\ge 0$.
\end{theorem}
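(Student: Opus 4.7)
My plan is to rewrite both the interchange driving force $\mathfrak{N}$ and the Maxwell driving force $p^{*}$ as linear combinations of the two Weierstrass excesses $W^{\circ}(\BF_{+},-\jump{\BF})$ and $W^{\circ}(\BF_{-},+\jump{\BF})$, and then to extract the desired bound from the non-negativity of each excess. Applying the definition (\ref{Wcirc}) and using the trivial identities $\BF_{+}-\jump{\BF}=\BF_{-}$ and $\BF_{-}+\jump{\BF}=\BF_{+}$, I obtain
\[
W^{\circ}(\BF_{+},-\jump{\BF})=-\jump{W}+(\BP_{+},\jump{\BF}),\qquad
W^{\circ}(\BF_{-},\jump{\BF})=\jump{W}-(\BP_{-},\jump{\BF}).
\]
Summing these two identities recovers $\mathfrak{N}=(\jump{\BP},\jump{\BF})$, while subtracting them produces $2p^{*}=2\jump{W}-(\BP_{+}+\BP_{-},\jump{\BF})$. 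Solving the resulting $2\times 2$ linear system then gives the key algebraic identity
\[
W^{\circ}(\BF_{\pm},\mp\jump{\BF})=\hf\mathfrak{N}\mp p^{*},
\]
so the claim is reduced to showing that the left-hand side is non-negative for each choice of sign.

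To finish, I would invoke the classical Weierstrass necessary condition: if $\BF$ is strongly locally stable in the sense of (\ref{qcx}), then $W^{\circ}(\BF,\BH)\ge 0$ for every rank-one matrix $\BH$. This is the standard consequence of quasiconvexity at a point, obtained by testing (\ref{qcx}) with a simple laminated variation whose two phases are rank-one perturbations of $\BF$ with opposite signs and complementary volume fractions, and then letting the volume fraction of one phase tend to zero; see, e.g., \cite{ball7677}. Since $\jump{\BF}=\Ba\otimes\Bn$ is rank-one, applying this to $(\BF_{+},-\jump{\BF})$ and to $(\BF_{-},+\jump{\BF})$ yields both $\hf\mathfrak{N}-p^{*}\ge 0$ and $\hf\mathfrak{N}+p^{*}\ge 0$, i.e.\ (\ref{normest}). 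The only non-routine ingredient in the whole argument is precisely this passage from the pointwise stability condition (\ref{qcx}) to a finite Weierstrass inequality along a rank-one line; everything else is direct algebraic manipulation of the definition of $W^{\circ}$.
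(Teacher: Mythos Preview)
Your proof is correct and essentially identical to the paper's: the paper also reduces the claim to the algebraic identity $W^{\circ}(\BF_{\pm},\mp\jump{\BF})=\hf\mathfrak{N}\mp p^{*}$ (its equation (\ref{Weier})) together with the Weierstrass necessary condition $W^{\circ}(\BF,\Bu\otimes\Bv)\ge 0$ at a strongly locally stable $\BF$ (its Lemma~\ref{lem:Weier}, attributed to \cite{mcsh31,graves39}). The only cosmetic difference is the order of presentation---you first derive the identity and then invoke Weierstrass, while the paper packages the Weierstrass step as a separate lemma before computing the identity.
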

The theorem is an immediate consequence of Lemma~\ref{lem:qcxnorm} below, that
shows that the algebraic inequality (\ref{normest}) is a consequence of the
  ``Weierstrass condition''  stated
in the next lemma.
\begin{lemma}
  \label{lem:Weier}
Suppose $\BF$ is strongly locally stable. Then the Weierstrass condition holds
\begin{equation}
  \label{ARest}
  W^{\circ}(\BF,\Bu\otimes\Bv)\ge 0,\text{ for all }
\Bu\in\bb{R}^{m},\ \Bv\in\bb{R}^{d}.
\end{equation}
\end{lemma}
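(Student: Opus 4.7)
The plan is to derive the Weierstrass inequality by testing the strong local stability condition (\ref{qcx}) against a rank-one laminate variation, which is the standard maneuver behind the classical derivation of the Legendre--Hadamard condition from quasiconvexity. Without loss of generality take $|\Bv|=1$. For each $t\in(0,1)$ the idea is to build a test function whose gradient takes the value $\Bu\otimes\Bv$ on a set of relative density $t$ and the value $-\tfrac{t}{1-t}\Bu\otimes\Bv$ on the complementary set of density $1-t$; these weights are forced by the constraint $\int_B\Grad\BGf\,d\Bz=\Bzr$ imposed by compactly supported variations.

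\textbf{Step 1 (Construction).} Let $\phi:\bb{R}\to\bb{R}$ be the $1$-periodic sawtooth of zero mean with derivative $\phi'=1$ on a fraction $t$ of each period and $\phi'=-t/(1-t)$ on the rest. Choose a cutoff $\chi_h$ equal to $1$ on $(1-\sqrt{h})B$, supported in $B$, with $|\Grad\chi_h|=O(h^{-1/2})$, and set
$$
\BGf_h(\Bz)=\chi_h(\Bz)\,h\,\phi\!\Bigl(\frac{\Bz\cdot\Bv}{h}\Bigr)\Bu.
$$
A standard mollification lifts this to an admissible $C_0^\infty(B;\bb{R}^m)$ test function with $\|\BGf_h\|_\infty=O(h)$. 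Its gradient decomposes as $\chi_h\,\phi'(\Bz\cdot\Bv/h)\,\Bu\otimes\Bv + h\,\phi(\Bz\cdot\Bv/h)\,\Bu\otimes\Grad\chi_h$; the second, corrector, term has $L^\infty$ size $O(\sqrt{h})$ on an annular region of measure $O(\sqrt{h})$.

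\textbf{Step 2 (Passage to the limits).} By continuity and boundedness of $W$ the annular corrector contributes $O(\sqrt{h})$ to $\dashint_B W(\BF+\Grad\BGf_h)\,d\Bz$, while periodic averaging applied to the continuous function $\xi\mapsto W(\BF+\xi\Bu\otimes\Bv)$ yields, in the limit $h\to 0$, the two-phase inequality
$$
t\,W(\BF+\Bu\otimes\Bv)+(1-t)\,W\!\Bigl(\BF-\tfrac{t}{1-t}\Bu\otimes\Bv\Bigr)\ge W(\BF)
$$
directly from (\ref{qcx}). Introduce $s=-t/(1-t)<0$: the inequality rearranges to
$$
\frac{W(\BF+s\,\Bu\otimes\Bv)-W(\BF)}{s}\le W(\BF+\Bu\otimes\Bv)-W(\BF),
$$
where the sign flip reflects division by $s<0$. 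Sending $s\to 0^-$, the $C^1$-differentiability of $W$ at $\BF$ identifies the left-hand side with $(W_\BF(\BF),\Bu\otimes\Bv)$, which gives (\ref{ARest}).

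The main obstacle is technical and lies in Step 1: one must verify that the sawtooth, cut off smoothly and mollified, gives a genuine $C_0^\infty$ test function for which the $O(\sqrt{h})$ corrector in $\Grad\BGf_h$ and the $O(\sqrt{h})$ boundary annulus each contribute $o(1)$ to the averaged energy as $h\to 0$. The $\sqrt{h}$ scaling in the cutoff is chosen precisely to balance the $h^{-1/2}$ blow-up of $\Grad\chi_h$ against the $O(h)$ amplitude of $h\phi$; the book-keeping is routine but is what ensures that the two-valued oscillation is cleanly separated from the boundary correction.
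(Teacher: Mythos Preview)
Your argument is correct and is the classical one. The paper does not supply its own proof of this lemma: it simply cites the classical references \cite{mcsh31,graves39}, where exactly this rank-one laminate (sawtooth) construction and volume-fraction limit appear.
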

The proof of the lemma can be found in \cite{mcsh31,graves39}.
\begin{lemma}
  \label{lem:qcxnorm}
  Suppose that both $\BF_{+}$ and $\BF_{-}$ satisfy the Weierstrass condition
  (\ref{ARest}) and the kinematic compatibility condition (\ref{kincomp}).
  Then the inequality (\ref{normest}) holds.
\end{lemma}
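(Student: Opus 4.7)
The plan is to apply the Weierstrass condition (\ref{ARest}) of Lemma~\ref{lem:Weier} at each of the two stable gradients, using the jump $\jump{\BF}=\Ba\otimes\Bn$ itself (or its negative) as the rank-one perturbation. The kinematic compatibility condition is exactly what makes this admissible: since $\jump{\BF}$ is rank-one, both $-\jump{\BF}$ and $+\jump{\BF}$ qualify as test directions of the form $\Bu\otimes\Bv$ in (\ref{ARest}).

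The key observation is that $\BF_{+}+(-\jump{\BF})=\BF_{-}$ and $\BF_{-}+\jump{\BF}=\BF_{+}$, so the Weierstrass increments can be written out explicitly in terms of $\jump{W}$ and $\BP_{\pm}$. Applying (\ref{ARest}) at $\BF_{+}$ with perturbation $-\jump{\BF}$ gives
\[
0\le W^{\circ}(\BF_{+},-\jump{\BF})=W(\BF_{-})-W(\BF_{+})+(\BP_{+},\jump{\BF})=-\jump{W}+(\BP_{+},\jump{\BF}),
\]
and applying it at $\BF_{-}$ with perturbation $+\jump{\BF}$ gives
\[
0\le W^{\circ}(\BF_{-},\jump{\BF})=W(\BF_{+})-W(\BF_{-})-(\BP_{-},\jump{\BF})=\jump{W}-(\BP_{-},\jump{\BF}).
\]
These two inequalities sandwich $\jump{W}$ between $(\BP_{-},\jump{\BF})$ and $(\BP_{+},\jump{\BF})$.

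I would then subtract $(\lump{\BP},\jump{\BF})$ from every part of the sandwich. Using $\BP_{\pm}-\lump{\BP}=\pm\tfrac{1}{2}\jump{\BP}$, the two bounds become $\pm\tfrac{1}{2}\mathfrak{N}$, while the middle expression is exactly the Maxwell driving force $p^{*}=\jump{W}-(\lump{\BP},\jump{\BF})$. This yields
\[
-\tfrac{1}{2}\mathfrak{N}\le p^{*}\le\tfrac{1}{2}\mathfrak{N},
\]
which is equivalent to (\ref{normest}) and, a fortiori, to $\mathfrak{N}\ge 0$.

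There is no real obstacle here: the entire content of the lemma is the rank-one nature of $\jump{\BF}$, which allows the Weierstrass condition to be evaluated on the single matrix that connects the two stable phases. The only thing to watch is the sign bookkeeping, specifically that $-\jump{\BF}$ takes $\BF_{+}$ to $\BF_{-}$ (not the other way around), so that the two applications of (\ref{ARest}) produce inequalities with opposite orientation and genuinely pinch $p^{*}$ from both sides.
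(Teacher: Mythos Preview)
Your proposal is correct and is essentially identical to the paper's own proof: the paper also applies the Weierstrass condition at $\BF_{\pm}$ with the rank-one perturbation $\mp\jump{\BF}$, rewrites $\BP_{\pm}=\lump{\BP}\pm\tfrac{1}{2}\jump{\BP}$, and arrives at $W^{\circ}(\BF_{\pm},\mp\jump{\BF})=\mp p^{*}+\tfrac{1}{2}\mathfrak{N}\ge 0$, which is your sandwich inequality in compressed $\pm$ notation.
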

\begin{proof}
Setting $\BF=\BF_{\pm}$ and $\Bu\otimes\Bv=\mp\jump{\BF}$ in (\ref{ARest}), we obtain
\[
W^{\circ}(\BF_{\pm},\mp\jump{\BF})=W(\BF_{\mp})-W(\BF_{\pm})\pm(\BP_{\pm},\jump{\BF})=
\mp(\jump{W}-(\BP_{\pm},\jump{\BF})).
\]
Writing $\BP_{\pm}=\lump{\BP}\pm\hf\jump{\BP}$, where $\lump{\BP}=(\BP_{+}+\BP_{-})/2$,
we obtain
\begin{equation}
  \label{Weier}
W^{\circ}(\BF_{\pm},\mp\jump{\BF})=\mp p^{*}+\frac{\mathfrak{N}}{2}\ge 0.
\end{equation}
The inequality (\ref{normest}) follows.
\end{proof}

Theorem ~\ref{th:pospres}, whose proof is now straightforward, quantifies to what extend conditions of stability of surfaces of jump discontinuity
are stronger than conditions of strong local stability of each individual phase. 


\section{Proof of the main theorem}
\setcounter{equation}{0}
\label{sec:proof}
We are now in a position to prove Theorem~\ref{th:main}. The necessity of (S)
was already observed in Section~\ref{sec:prelim}, and the necessity of (I),
even with equality, was shown in Section~\ref{sec:norm}. The proof of
sufficiency will be split into a sequence of lemmas. Our first step will be to
recover the known interface jump conditions. In order to prove these algebraic
relations only the Weierstrass condition (\ref{ARest}) will be needed.

\begin{lemma}
  \label{lem:jc}
Assume that the pair $\BF_{\pm}$ satisfies the following three conditions
\begin{itemize}
\item[(K)] Kinematic compatibility: $\jump{\BF}=\Ba\otimes\Bn$ for some $\Ba\in\bb{R}^{m}$ and
$\Bn\in\bb{S}^{d-1}$,
\item[(I)] Interchange stability of the interface:
  $\mathfrak{N}=(\jump{\BP},\jump{\BF})\le 0$,
\item[(W)] Weierstrass condition: $W^{\circ}(\BF_{\pm},\Bu\otimes\Bv)\ge 0$ for all
$\Bu\in\bb{R}^{m}$ and $\Bv\in\bb{R}^{d}$.
\end{itemize}
Then the following interface conditions must hold:
\begin{itemize}
\item The Maxwell jump condition
\begin{equation}
  \label{maxwell}
p^{*}=0.
\end{equation}
\item Traction continuity
\begin{equation}
  \label{phbequil}
  \jump{\BP}\Bn=\Bzr.
\end{equation}
\item Interface roughening condition \cite{grtrpe}
\begin{equation}
  \label{pta}
  \jump{\BP}^{T}\Ba=\Bzr.
\end{equation}
\end{itemize}
\end{lemma}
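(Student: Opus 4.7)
The plan is to extract the three jump conditions from the sharpness of the Weierstrass inequality in the two rank-one directions that exchange $\BF_{+}$ and $\BF_{-}$. The key algebraic identity, already derived inside the proof of Lemma~\ref{lem:qcxnorm} as equation~(\ref{Weier}), is
\begin{equation*}
W^{\circ}(\BF_{\pm},\mp\jump{\BF}) = \mp p^{*} + \frac{\mathfrak{N}}{2}.
\end{equation*}
By hypothesis (W), both left-hand sides are non-negative, so $\mathfrak{N}\ge 2|p^{*}|\ge 0$. Combined with hypothesis (I), $\mathfrak{N}\le 0$, this forces $\mathfrak{N}=0$ and $p^{*}=0$, which is precisely the Maxwell jump condition~(\ref{maxwell}).

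The previous step shows in particular that both Weierstrass inequalities are saturated: $W^{\circ}(\BF_{+},-\jump{\BF})=0=W^{\circ}(\BF_{-},\jump{\BF})$. I would then consider the scalar function $g(\Bu,\Bv)=W^{\circ}(\BF_{+},\Bu\otimes\Bv)$, which by (W) is non-negative on all of $\bb{R}^{m}\times\bb{R}^{d}$ and vanishes at $(\Bu,\Bv)=(-\Ba,\Bn)$; so $(-\Ba,\Bn)$ is an unconstrained global minimizer. A direct chain-rule calculation, using $\BF_{+}-\Ba\otimes\Bn=\BF_{-}$, gives the first-order conditions
\begin{equation*}
\Bzr=\nabla_{\Bu}g(-\Ba,\Bn)=(\BP_{-}-\BP_{+})\Bn=-\jump{\BP}\Bn,\qquad
\Bzr=\nabla_{\Bv}g(-\Ba,\Bn)=(\BP_{-}-\BP_{+})^{T}(-\Ba)=\jump{\BP}^{T}\Ba,
\end{equation*}
which are the traction continuity condition~(\ref{phbequil}) and the interface roughening condition~(\ref{pta}). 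Working with $g(\Bu,\Bv)=W^{\circ}(\BF_{-},\Bu\otimes\Bv)$ minimized at $(\Ba,\Bn)$ gives the same identities and serves as a consistency check.

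The argument is essentially a single observation: the Weierstrass inequality is tight precisely in the rank-one direction swapping $\BF_{+}$ and $\BF_{-}$, and the vanishing of the first-order variation at that minimum encodes the remaining two jump relations. The only technical care needed is the differentiation of $W^{\circ}$, which requires $W\in C^{1}$ near $\BF_{\pm}$; this is subsumed by the standing $C^{2}$ hypothesis of Theorem~\ref{th:main}, so no genuine obstacle arises. I do not foresee any further difficulty in turning this plan into a full proof.
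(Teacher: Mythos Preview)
Your proposal is correct and follows essentially the same route as the paper: both first combine the identity $W^{\circ}(\BF_{\pm},\mp\jump{\BF})=\mp p^{*}+\mathfrak{N}/2$ with (W) and (I) to force $\mathfrak{N}=0$ and $p^{*}=0$, and then extract (\ref{phbequil}) and (\ref{pta}) as first-order optimality conditions for $W^{\circ}(\BF_{\pm},\Bu\otimes\Bv)$ at the saturating point. The paper writes this second step as an explicit Taylor expansion of $\Go_{\pm}(\BGx,\BGn)=W^{\circ}(\BF_{\pm},\mp(\Ba+\BGx)\otimes(\Bn+\BGn))$ about $(\BGx,\BGn)=(\Bzr,\Bzr)$, which is exactly your gradient computation in different notation.
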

\begin{proof}
  Combining Lemma~\ref{lem:qcxnorm} with (I) we conclude that
  $\mathfrak{N}=0$, and hence, by (\ref{normest}), the Maxwell condition
  (\ref{maxwell}) holds.  In order to prove the remaining equalities we set
  $\Bu=\mp(\Ba+\BGx)$ and $\Bv=\Bn+\BGn$ in the Weierstrass condition (W),
  where $\Ba$ and $\Bn$ are as in (\ref{kincomp}) and $\BGx$ and $\BGn$ are
  small parameters. Then we obtain a pair of inequalities
\begin{equation}
  \label{feps}
  \Go_{\pm}(\BGx,\BGn)=W^{\circ}(\BF_{\pm},\mp(\Ba+\BGx)\otimes(\Bn+\BGn))\ge 0
\end{equation}
that hold for all $\BGx\in\bb{R}^{m}$ and all $\BGn\in\bb{R}^{d}$. Under our smoothness
assumptions on $W(\BF)$ the functions $\Go_{\pm}(\BGx,\BGn)$ are of class
$C^{2}$ in the \nbh\ of $(\Bzr,\Bzr)$ in the $(\BGx,\BGn)$-space. The Taylor
expansion up to first order in $(\BGx,\BGn)$ gives
\begin{equation}
  \label{exp1}
\Go_{\pm}(\BGx,\BGn)=\mp p^{*}+\frac{\mathfrak{N}}{2}+(\jump{\BP}\Bn,\BGx)+
(\jump{\BP}^{T}\Ba,\BGn)+O(|\BGx|^{2}+|\BGn|^{2}).
\end{equation}
Then the inequalities (\ref{feps}), together with $\mathfrak{N}=0$ and (\ref{maxwell}) imply
(\ref{phbequil}) and (\ref{pta}).
\end{proof}

Next we prove a differentiability lemma that guarantees the existence of
rank-1 directional derivatives of quasiconvex and rank-1 convex envelopes at
``marginally stable'' deformation gradients \cite{grtr08}. This result does
not require any additional growth conditions, as in the envelope regularity
theorems from \cite{bkk00}.
\begin{lemma}
  \label{lem:r1cx}
Let $V(\BF)$ be a rank-one convex function such that $V(\BF)\le W(\BF)$. Let
\[
\CA_{V}=\{\BF\in\CO:W(\BF)=V(\BF)\},
\]
where $\CO$ is an open subset of $\bb{M}$ on which $W(\BF)$ is of class
$C^{1}$.  Then for every $\BF\in\CA_{V}$ and every $\Bu\in\bb{R}^{m}$,
$\Bv\in\bb{R}^{d}$
\begin{equation}
  \label{deriv}
  \lim_{t\to 0}\frac{V(\BF+t\Bu\otimes\Bv)-V(\BF)}{t}=(W_{\BF}(\BF),\Bu\otimes\Bv).
\end{equation}
In particular,
\[
V(\BF+\Bu\otimes\Bv)\ge W(\BF)+(W_{\BF}(\BF),\Bu\otimes\Bv).
\]
\end{lemma}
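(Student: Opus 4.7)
The plan is to reduce the statement to a one-dimensional convex analysis fact by restricting to the rank-one line $t \mapsto \BF + t\Bu\otimes\Bv$. Since $V$ is rank-one convex, the function
\[
g(t) = V(\BF + t\Bu\otimes\Bv)
\]
is a convex function of the real variable $t$. Likewise, since $W$ is $C^{1}$ in a neighborhood of $\BF$ (and $\CO$ is open, so $\BF + t\Bu\otimes\Bv\in\CO$ for small $t$), the function $h(t) = W(\BF + t\Bu\otimes\Bv)$ is $C^{1}$ near $t=0$, with $h'(0) = (W_{\BF}(\BF),\Bu\otimes\Bv)$. The two functions are related by $g(t)\le h(t)$ everywhere, with equality at $t=0$ because $\BF\in\CA_{V}$.

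Next I would invoke the fact that a convex function of one variable admits one-sided derivatives everywhere, with $g'_{-}(0)\le g'_{+}(0)$. Writing the difference quotients of $g$ and $h$ at $0$ and using the sandwich $g\le h$ with $g(0)=h(0)$:
\begin{itemize}
\item for $t>0$, $\dfrac{g(t)-g(0)}{t}\le\dfrac{h(t)-h(0)}{t}$, so passing to the limit $t\to 0^{+}$ gives $g'_{+}(0)\le h'(0)$;
\item for $t<0$, dividing by the negative quantity $t$ reverses the inequality, hence $\dfrac{g(t)-g(0)}{t}\ge\dfrac{h(t)-h(0)}{t}$, which in the limit $t\to 0^{-}$ yields $g'_{-}(0)\ge h'(0)$.
\end{itemize}
Combining these with $g'_{-}(0)\le g'_{+}(0)$ forces
\[
g'_{-}(0)=g'_{+}(0)=h'(0)=(W_{\BF}(\BF),\Bu\otimes\Bv),
\]
which is exactly the directional derivative formula (\ref{deriv}).

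For the concluding inequality, I would use the standard fact that a convex function lies above any of its tangent lines at points of differentiability. Applied to $g$ at $t=0$ and evaluated at $t=1$, this gives
\[
V(\BF+\Bu\otimes\Bv)=g(1)\ge g(0)+g'(0)=W(\BF)+(W_{\BF}(\BF),\Bu\otimes\Bv),
\]
where I used $g(0)=V(\BF)=W(\BF)$ since $\BF\in\CA_{V}$. This immediately yields the ``In particular'' statement.

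I do not anticipate any serious obstacle: the argument is essentially a squeezing of one-sided derivatives between those of a $C^{1}$ majorant touching from above. The only point to be careful about is ensuring that $g$ and $h$ are defined on a common open interval around $0$, which is guaranteed by the openness of $\CO$ and the fact that $V$ is everywhere finite as a rank-one convex function. No growth or envelope-regularity hypotheses (as in \cite{bkk00}) are required precisely because we only need differentiability along one rank-one direction and are exploiting the contact condition $V(\BF)=W(\BF)$ together with the $C^{1}$ regularity of $W$ at $\BF$.
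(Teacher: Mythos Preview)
Your proof is correct and follows essentially the same approach as the paper: both restrict to the rank-one line, use convexity of $g$ (the paper's $v$) together with the $C^{1}$ majorant $h$ (the paper's $w$) touching at $t=0$ to squeeze the one-sided derivatives, and then invoke the tangent-line inequality for convex functions to obtain the final estimate. The arguments are identical up to notation.
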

\begin{proof}
By our assumption $v(t)=V(\BF+t\Bu\otimes\Bv)$ is convex on $\bb{R}$.
  Recall from the theory of convex functions that
\[
q(t)=\frac{v(t)-v(0)}{t}
\]
is monotone increasing on each of the intervals $(-\infty,0)$,
$(0,+\infty)$. Therefore, the limits
\[
v'(0^{\pm})=\lim_{t\to 0^{\pm}}\frac{v(t)-v(0)}{t}
\]
exist. Moreover, the convexity of $v(t)$ implies that $v'(0^{-})\le v'(0^{+})$.
Let $w(t)=W(\BF+t\Bu\otimes\Bv)$. By assumption $v(t)\le w(t)$. We also have
$v(0)=w(0)$, since $\BF\in\CA_{V}$. When $t>0$
\[
\frac{v(t)-v(0)}{t}\le\frac{w(t)-w(0)}{t}.
\]
Therefore, $v'(0^{+})\le w'(0)$. Similarly, when $t<0$ we obtain $v'(0^{-})\ge
w'(0)$. Thus,
\[
w'(0)\le v'(0^{-})\le v'(0^{+})\le w'(0).
\]
We conclude that the
limit on the \lhs\ of (\ref{deriv}) exists and is equal to
\[
w'(0)=(W_{\BF}(\BF),\Bu\otimes\Bv).
\]
Thus, the convex function $v(t)$ is differentiable at $t=0$ and
$v=w(0)+w'(0)t$ is a tangent line to its graph at $t=0$. Convexity of $v(t)$ then
implies that $v(t)\ge w(0)+w'(0)t$ for all $t\in\bb{R}$.
\end{proof}

In general, one does not expect explicit formulas for the values of the
quasiconvex envelope $QW$ in terms of $W$. In that respect
Lemma~\ref{lem:RWformula} below provides a nice exception to the rule.
  \begin{lemma}
    \label{lem:RWformula}
Assume that the pair $\BF_{\pm}$ satisfies all conditions of
Theorem~\ref{th:main}. Then
\begin{equation}
  \label{RWformula}
QW(t\BF_{+}+(1-t)\BF_{-})=RW(t\BF_{+}+(1-t)\BF_{-})=tW(\BF_{+})+(1-t)W(\BF_{-})
\end{equation}
for all $t\in[0,1]$, where $RW(\BF)$ is the rank-1 convex envelope of $W(\BF)$
\cite{Dak}.
  \end{lemma}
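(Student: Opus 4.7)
The plan is to squeeze $QW(\BF_{t})$ and $RW(\BF_{t})$ between the same affine function of $t$, where $\BF_{t}=\BF_{-}+t\Ba\otimes\Bn=t\BF_{+}+(1-t)\BF_{-}$. Concretely, I aim to establish
\begin{equation*}
(1-t)W(\BF_{-})+tW(\BF_{+})\;\le\;QW(\BF_{t})\;\le\;RW(\BF_{t})\;\le\;(1-t)W(\BF_{-})+tW(\BF_{+}),
\end{equation*}
which collapses to equality throughout and yields (\ref{RWformula}) at once.

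For the upper half of this chain I would first record the a priori ordering $QW\le RW\le W$: every quasiconvex function is rank-one convex, so $QW$ is itself a rank-one convex minorant of $W$ and therefore dominated by the largest such minorant $RW$. Since $t\mapsto\BF_{t}$ is affine along the rank-one direction $\Ba\otimes\Bn$ and $RW$ is rank-one convex, the scalar function $t\mapsto RW(\BF_{t})$ is convex on $[0,1]$; combined with $RW(\BF_{\pm})\le W(\BF_{\pm})$ this gives
\begin{equation*}
RW(\BF_{t})\le(1-t)RW(\BF_{-})+tRW(\BF_{+})\le(1-t)W(\BF_{-})+tW(\BF_{+}).
\end{equation*}

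For the matching lower bound I would apply Lemma~\ref{lem:r1cx} with $V=QW$ (rank-one convex and $\le W$), base point $\BF=\BF_{-}$ (which lies in $\CA_{QW}$ by hypothesis (S)), and direction $\Bu\otimes\Bv=t\Ba\otimes\Bn$. Lemma~\ref{lem:r1cx} then yields
\begin{equation*}
QW(\BF_{t})\ge W(\BF_{-})+t(W_{\BF}(\BF_{-}),\Ba\otimes\Bn)=W(\BF_{-})+t(\BP_{-}\Bn,\Ba).
\end{equation*}
The key simplification is to invoke Lemma~\ref{lem:jc}: its hypotheses (K), (I), (W) are all met here, since (W) is supplied by (S) via Lemma~\ref{lem:Weier}. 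It produces both the Maxwell jump condition $p^{*}=0$ and traction continuity $\jump{\BP}\Bn=\Bzr$. Writing $\Bt=\BP_{-}\Bn=\BP_{+}\Bn$, the Maxwell identity rearranges to $\jump{W}=(\lump{\BP}\Bn,\Ba)=(\Bt,\Ba)$, so the right-hand side above equals $W(\BF_{-})+t\jump{W}=(1-t)W(\BF_{-})+tW(\BF_{+})$.

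Chaining the three inequalities then forces equality everywhere. I do not expect a genuine obstacle; the one subtlety worth flagging is that the affine tangent supplied by Lemma~\ref{lem:r1cx} at $\BF_{-}$ coincides \emph{exactly} with the chord $(1-t)W(\BF_{-})+tW(\BF_{+})$ precisely when the Maxwell condition and traction continuity both hold, and this alignment is what allows the upper and lower bounds to meet. Everything else is bookkeeping: the $C^{2}$ assumption on $W$ near $\{\BF_{+},\BF_{-}\}$ guarantees the smoothness needed in Lemma~\ref{lem:r1cx}, and no further growth or regularity hypotheses enter.
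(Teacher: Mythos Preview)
Your proof is correct and follows essentially the same route as the paper: both arguments sandwich $QW(\BF_{t})$ between the affine chord using rank-one convexity from above and Lemma~\ref{lem:r1cx} (with $V=QW$ at $\BF_{-}$) from below, then invoke Lemma~\ref{lem:jc} to show the bounds coincide. The only cosmetic differences are that the paper handles $QW$ first and then squeezes $RW$ separately, and it simplifies $(\BP_{-},\jump{\BF})$ via the pair $p^{*}=0$, $\mathfrak{N}=0$ rather than your equivalent pair $p^{*}=0$, $\jump{\BP}\Bn=\Bzr$.
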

  \begin{proof}
By assumption (S) we have $QW(\BF_{\pm})=W(\BF_{\pm})\ge RW(\BF_{\pm})\ge
QW(\BF_{\pm})$. Therefore,
\begin{equation}
  \label{qcxr1}
  QW(\BF_{\pm})=W(\BF_{\pm})=RW(\BF_{\pm}).
\end{equation}
By rank-one convexity of
  $QW(\BF)$ \cite{morr66,ball7677,Dak} and assumptions (K) and (S) we have
\begin{equation}
  \label{RWineq1}
  QW(t\BF_{+}+(1-t)\BF_{-})\le tQW(\BF_{+})+(1-t)QW(\BF_{-})=tW(\BF_{+})+(1-t)W(\BF_{-}).
\end{equation}
To prove the opposite inequality we apply Lemma~\ref{lem:r1cx} and obtain
\begin{equation}
  \label{RWineq2}
  QW(t\BF_{+}+(1-t)\BF_{-})\ge W(\BF_{-})+(\BP_{-},t\jump{\BF})=
tW(\BF_{+})+(1-t)W(\BF_{-})-tp^{*}-\frac{t}{2}\mathfrak{N}.
\end{equation}
Thus, Lemma~\ref{lem:jc}, (\ref{RWineq1}) and (\ref{RWineq2}) result in the
formula for $QW(t\BF_{+}+(1-t)\BF_{-})$ from (\ref{RWformula}). We also have,
in view of the assumption (K) and (\ref{qcxr1}), that
\begin{multline*}
  tW(\BF_{+})+(1-t)W(\BF_{-})=QW(t\BF_{+}+(1-t)\BF_{-})\le
  RW(t\BF_{+}+(1-t)\BF_{-})\le\\ tRW(\BF_{+})+(1-t)RW(\BF_{-})=tW(\BF_{+})+(1-t)W(\BF_{-}).
\end{multline*}
Formula (\ref{RWformula}) is now proved.
 \end{proof}
 We are now ready to establish the inequality (\ref{jdqcx}), proving
 Theorem~\ref{th:main}. Let $\BGf_{0}\in W_{0}^{1,\infty}(B;\bb{R}^{m})$ be an
 arbitrary test function.  Let $Q_{\Bn}$ be the cube with side 2 centered at
 the origin and having a face with normal $\Bn$. Consider a $Q_{\Bn}$-periodic
 function $\bar{\BF}_{\rm per}(\Bz)$ on $\bb{R}^{d}$ given on its period $Q_{\Bn}$
 by
\[
\bar{\BF}_{\rm per}(\Bz)=\left\{
    \begin{array}{ll}
      \BF_{+},&\text{ if }\Bz\cdot\Bn>0\\
      \BF_{-},&\text{ if }\Bz\cdot\Bn<0.
    \end{array}\right.
\]
Assumption (K) implies that
\[
\bar{\BF}_{\rm per}(\Bz)=\lump{\BF}+\Grad(\psi(\Bz\cdot\Bn)\Ba),\qquad\lump{\BF}=\hf(\BF_{+}+\BF_{-}),
\]
where $\psi(\Gz)$ is a 2-periodic saw-tooth function such that that
$\psi(\Gz)=|\Gz|/2$, $\Gz\in[-1,1]$. Let $\BGf_{\rm per}(\Bz)$ be
$Q_{\Bn}$-periodic function such that
\[
\BGf_{\rm per}(\Bz)=
\begin{cases}
  \BGf_{0}(\Bz),&\Bz\in B,\\
  \Bzr,&\Bz\in Q_{\Bn}\setminus B.
\end{cases}
\]
This function is Lipschitz continuous, since $\BGf_{0}\in
W_{0}^{1,\infty}(B;\bb{R}^{m})$ and $B\subset Q_{\Bn}$.
The function $QW(\BF)$ is quasiconvex and the
function $\BGf_{\rm per}(\Bz)+\psi(\Bz\cdot\Bn)\Ba$ is
$Q_{\Bn}$-periodic. Therefore (see \cite{Dak}),
\[
QW(\lump{\BF})\le\dashint_{Q_{\Bn}}QW(\lump{\BF}+\Grad(\BGf_{\rm per}+\psi(\Bz\cdot\Bn)\Ba))d\Bz
\le\dashint_{Q_{\Bn}}W(\bar{\BF}_{\rm per}(\Bz)+\Grad\BGf_{\rm per})d\Bz.
\]
By Lemma~\ref{lem:RWformula}
\[
QW(\lump{\BF})=\lump{W}=\dashint_{Q_{\Bn}}W(\bar{\BF}_{\rm per}(\Bz))d\Bz.
\]
Hence
\[
\int_{Q_{\Bn}}W(\bar{\BF}_{\rm per}(\Bz))d\Bz\le\int_{Q_{\Bn}}W(\bar{\BF}_{\rm per}(\Bz)+\Grad\BGf_{\rm per})d\Bz.
\]
The inequality (\ref{jdqcx}) is proved, since $\BGf_{0}(\Bz)$ is supported
on $B$.

We remark that Theorem~\ref{th:main} answers the question studied in
\cite{silh05} by giving a complete characterization of all possible pairs of
deformation gradient values $\BF_{\pm}$ that can occur on a stable phase
boundary. Global minimality of $\bar{\By}(\Bz)$, given by (\ref{ybardef}) also
implies that any other interface conditions, like, for example, local Grinfeld
condition \cite{gkt10} or roughening stability inequality
\cite[Remark~4.2]{grtrpe}, must be consequences of (K), (I) and (S).


\section{An example of strongly locally stable interfaces}
\setcounter{equation}{0}
\label{sec:ex}
In this section we  establish a
relation between \emph{particular} solutions of the variational problems
(\ref{homloc}) and (\ref{jdloc}) which elucidates the role played in the theory by the normality condition.

Consider the set $\mathfrak{B}$ of all $\BF\in\bb{M}$ that are not strongly locally
stable; we called this set the "elastic binodal" in \cite{grtr08}. For such $\BF$ the infimum in the variational
problem (\ref{homloc}) may be reachable only by  minimizing sequences characterized  by their Young measures
 \cite{Tartar:1984:EOE,kipe91}.  Suppose that for some $\BF\in\mathfrak{B}$ the Young measure solution of (\ref{homloc}) has the form of a simple laminate \cite{pedr93}:
\begin{equation}
  \label{YMsol}
\nu=\Gth\Gd_{\BF_{+}}+(1-\Gth)\Gd_{\BF_{-}},\qquad
\BF=\Gth\BF_{+}+(1-\Gth)\BF_{-},\quad 0<\Gth<1.
\end{equation}
The set $\mathfrak{B}_{1}$ of all such $\BF\in\mathfrak{B}$ will be called the  \emph{simple laminate region}. There is a direct connection between the simple laminate region
$\mathfrak{B}_{1}$ and locally
stable interfaces.

\begin{theorem}
  \label{th:link}
  A strongly locally stable interface determined by $\BF_{\pm}$ corresponds to
  a straight line segment
  $\{\Gth\BF_{+}+(1-\Gth)\BF_{-}:\Gth\in(0,1)\}\subset\mathfrak{B}_{1}$, so
  that the laminate Young measure (\ref{YMsol}) solves (\ref{homloc}) with
  $\BF=\Gth\BF_{+}+(1-\Gth)\BF_{-}$. Conversely, every point
  $\BF\in\mathfrak{B}_{1}$, corresponding to a laminate Young measure
  (\ref{YMsol}) determines a strongly locally stable interface.
\end{theorem}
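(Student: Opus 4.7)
The plan is to handle the two implications separately, in both cases using Theorem~\ref{th:main} as the bridge between strong local stability of an interface and the triple of conditions (K), (S), (I), while exploiting Lemma~\ref{lem:RWformula} and Lemma~\ref{lem:r1cx} to read off $QW$ along the rank-one segment joining $\BF_-$ and $\BF_+$. The standard relaxation identity, that the infimum in (\ref{homloc}) equals $QW(\BF)$, will be used throughout to interpret ``Young measure solution''.

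For the forward implication, suppose $\BF_\pm$ determines a strongly locally stable interface. Theorem~\ref{th:main} furnishes (K), (S), and (I), and Lemma~\ref{lem:RWformula} then evaluates $QW(\Gth\BF_+ + (1-\Gth)\BF_-) = \Gth W(\BF_+) + (1-\Gth) W(\BF_-)$ on the entire segment. The laminate (\ref{YMsol}) has barycenter $\BF_\Gth = \Gth\BF_+ + (1-\Gth)\BF_-$, and its averaged energy $\langle \nu, W\rangle$ equals this value, i.e., the infimum of (\ref{homloc}), so $\nu$ is a Young measure solution at each $\BF_\Gth$. Provided $W$ is not itself affine along the rank-one segment, the strict inequality $W(\BF_\Gth) > QW(\BF_\Gth)$ places $\BF_\Gth$ in $\mathfrak{B}$, and the laminate form of the solution places it in $\mathfrak{B}_1$.

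For the converse, let $\BF \in \mathfrak{B}_1$ with laminate solution $\nu$ as in (\ref{YMsol}); I will verify (K), (S), (I) and then invoke Theorem~\ref{th:main}. Condition (K) is automatic: the two-atom support of a gradient Young measure must be rank-one connected, so $\jump{\BF} = \Ba\otimes\Bn$. For (S), optimality of $\nu$ gives $QW(\BF) = \Gth W(\BF_+) + (1-\Gth) W(\BF_-)$, while rank-one convexity of $QW$ along $\jump{\BF}$ and $QW\le W$ produce
\[
QW(\BF) \le \Gth\,QW(\BF_+) + (1-\Gth)\,QW(\BF_-) \le \Gth W(\BF_+) + (1-\Gth) W(\BF_-);
\]
equality throughout forces $QW(\BF_\pm)=W(\BF_\pm)$. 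The substantive step is (I). Set $g(s) = QW(\BF_- + s\jump{\BF})$; this is convex on $\bb{R}$ since $\jump{\BF}$ is rank one and $QW$ is rank-one convex. By (S) and optimality, $g(0) = W(\BF_-)$, $g(1) = W(\BF_+)$, and $g(\Gth) = \Gth g(1) + (1-\Gth) g(0)$, and a convex function that meets its own chord at an interior point must coincide with that chord on the full interval, so $g$ is affine on $[0,1]$ with slope $\jump{W}$. Applying Lemma~\ref{lem:r1cx} with $V = QW$ at $\BF_-$ in direction $\jump{\BF}$ produces a two-sided derivative of $g$ at $s=0$ equal to $(\BP_-,\jump{\BF})$; the symmetric application at $\BF_+$ in direction $-\jump{\BF}$ gives $g'(1) = (\BP_+,\jump{\BF})$. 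Each of these must equal $\jump{W}$, whence $\mathfrak{N} = (\jump{\BP},\jump{\BF}) = 0$, which in particular establishes (I). Theorem~\ref{th:main} now yields strong local stability of the interface.

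The hard step is this last one. Theorem~\ref{th:pospres} only supplies $\mathfrak{N}\ge 0$ from stability of the endpoints, so the matching inequality $\mathfrak{N}\le 0$ must be mined from the \emph{global} optimality of the laminate measure rather than from pointwise stability of $\BF_\pm$. The mechanism sketched above is that optimality flattens $QW$ to an affine function along the rank-one segment, and Lemma~\ref{lem:r1cx} then promotes this flatness to an equality of one-sided derivatives with the traction contractions on both sides of the interface, forcing $\mathfrak{N}$ to vanish.
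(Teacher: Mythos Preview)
Your proof is correct and follows essentially the same route as the paper's own argument: for the converse you derive (S) from the same chain of inequalities, then use the ``convex function meeting its chord at an interior point is affine on the segment'' fact (which the paper isolates as a separate lemma) together with Lemma~\ref{lem:r1cx} at both endpoints to force $(\BP_\pm,\jump{\BF})=\jump{W}$ and hence $\mathfrak{N}=0$, exactly as the paper does in its Lemma~\ref{lem:cxty}. Your explicit caveat that $\BF_\Gth\in\mathfrak{B}$ requires $W$ not to be affine along the rank-one segment is a point the paper passes over in silence.
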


\begin{proof}
If $\Pi_{\Bn}=\{\Bz\in\bb{R}^{d}:\Bz\cdot\Bn=0\}$ is a strongly locally stable interface
determined by $\BF_{+}$ and $\BF_{-}$ then the pair $\BF_{\pm}$ satisfies
conditions (K), (I) and (S). By Lemma~\ref{lem:RWformula} the gradient Young
measure (\ref{YMsol}) attains the minimum in (\ref{homloc}) for
$\BF=\Gth\BF_{+}+(1-\Gth)\BF_{-}$, $\Gth\in(0,1)$, and thus
$\BF\in\mathfrak{B}_{1}$. If
$\mathfrak{B}_{1}$ has a non-empty interior then formula (\ref{RWformula})
says that the graph of the quasiconvex envelope $QW(\BF)$ over
$\mathfrak{B}_{1}$ is formed by straight line segments joining
$(\BF_{+},W(\BF_{+}))$ and $(\BF_{-},W(\BF_{-}))$. In other words the graph of
$QW(\BF)$ is a ruled surface.

Conversely, if the gradient Young measure (\ref{YMsol}) attains the minimum
in (\ref{homloc}), then $\BF_{\pm}$ satisfy the kinematic compatibility
condition (\ref{kincomp}) and
\begin{equation}
  \label{single}
QW(\Gth\BF_{+}+(1-\Gth)\BF_{-})=\Gth W(\BF_{+})+(1-\Gth)W(\BF_{-}).
\end{equation}
The difference between (\ref{single}) and (\ref{RWformula}) is that
(\ref{single}) is assumed to hold for a single fixed value of $\Gth\in(0,1)$.
Therefore, both material stability (S) at $\BF_{\pm}$ and interchange stability (I) need
to be established.
\begin{lemma}
  \label{lem:cxty}
  Assume that the pair $\BF_{\pm}$ satisfies (\ref{kincomp}) and that
  (\ref{single}) holds for some $\Gth\in (0,1)$. Then
  $QW(\BF_{\pm})=W(\BF_{\pm})$ and $\mathfrak{N}=0$.
\end{lemma}
\begin{proof}
  The proof is based on the following general property of convex functions.
  \begin{lemma}
    \label{lem:cx}
Let $\phi(t)$ be a convex function on $[0,1]$. Suppose that
$\phi(\Gth)=\Gth\phi(1)+(1-\Gth)\phi(0)$ for some $\Gth\in (0,1)$. Then
\begin{equation}
  \label{aff}
  \phi(t)=t\phi(1)+(1-t)\phi(0)
\end{equation}
for all $t\in[0,1]$.
  \end{lemma}
  \begin{proof}
    By convexity
    \begin{equation}
      \label{cxdef}
      \phi(t)\le t\phi(1)+(1-t)\phi(0),\quad t\in[0,1].
    \end{equation}
If $t\in(0,\Gth)$, then
\[
\Gth=\Gl+(1-\Gl)t,\qquad\Gl=\frac{\Gth-t}{1-t}\in(0,1).
\]
Therefore, by the assumption of the lemma and convexity of $\phi$
\[
\Gth\phi(1)+(1-\Gth)\phi(0)=\phi(\Gth)\le\Gl\phi(1)+(1-\Gl)\phi(t).
\]
It follows that
\[
\phi(t)\ge\frac{(\Gth-\Gl)\phi(1)+(1-\Gth)\phi(0)}{1-\Gl}=t\phi(1)+(1-t)\phi(0).
\]
This inequality in combination with (\ref{cxdef}) establishes (\ref{aff}) for
$t\in(0,\Gth)$. The proof of (\ref{aff}) for $t\in(\Gth,1)$ is similar.
  \end{proof}
To prove Lemma~\ref{lem:cxty} we recall that $QW(\BF)\le W(\BF)$ for all
$\BF\in\bb{M}$. By (\ref{single}) and rank-1 convexity of $QW(\BF)$ we have
\begin{multline*}
  \Gth W(\BF_{+})+(1-\Gth)W(\BF_{-})=QW(\Gth\BF_{+}+(1-\Gth)\BF_{-})\le\\
\Gth QW(\BF_{+})+(1-\Gth)QW(\BF_{-})\le\Gth W(\BF_{+})+(1-\Gth)W(\BF_{-}),
\end{multline*}
which is possible \IFF $QW(\BF_{\pm})=W(\BF_{\pm})$. Then, defining
\[
\phi(t)=QW(t\BF_{+}+(1-t)\BF_{-})
\]
and applying Lemma~\ref{lem:cx} we obtain (\ref{RWformula}). We can also apply
Lemma~\ref{lem:r1cx} with $\BF=\BF_{\pm}$, $V(\BF)=QW(\BF)$. The formula
(\ref{deriv}) allows us to differentiate (\ref{RWformula}) at $t=0$ and $t=1$:
\[
(\BP_{-},\jump{\BF})=\jump{W},\qquad(\BP_{+},\jump{\BF})=\jump{W}.
\]
Subtracting the two equalities we obtain $\mathfrak{N}=0$.
\end{proof}
Thus, we have shown that every $\BF$ in the simple laminate region
$\mathfrak{B}_{1}$ gives rise to the pair $\BF_{\pm}$ satisfying all
conditions of Theorem~\ref{th:main}. Theorem~\ref{th:main} then implies that
the interface $\Pi_{\Bn}$ determined by $\BF_{\pm}$ is strongly locally
stable. Theorem~\ref{th:link} is now proved.
\end{proof}

\begin{remark}
\label{rem:marg}
The system of algebraic
equations (\ref{kincomp}), (\ref{maxwell}), (\ref{phbequil}) and (\ref{pta})
defines a co-dimension 1 surface $\mathfrak{J}\subset\bb{M}$  called
the ``jump set''. We have shown in \cite{grtrpe} that under some
non-degeneracy assumptions the jump set must lie in the closure of the binodal
region $\mathfrak{B}$. In fact all points on the jump set are ``marginally
stable'' and detectable through the nucleation of an infinite layer in an
infinite space \cite{grtr08}. It follows that the existence of a strongly
locally stable interface has significant consequences for the geometry of
$\mathfrak{B}$. The presence of stable interfaces implies that a part of the
jump set must coincide with a part of the ``binodal'', the boundary of
$\mathfrak{B}$. The rank-1 lines joining $\BF_{+}$ and $\BF_{-}$, both of
which lie on the binodal, cover the simple lamination region
$\mathfrak{B}_{1}\subset\mathfrak{B}$.
\end{remark}


\section{Analogy with plasticity theory}
\setcounter{equation}{0}
\label{sec:normality}
In this section we show that the algebraic equation
$$\mathfrak{N}=0,$$
 interpreted  above as condition of interchange equilibrium, is
conceptually similar to the well known \emph{normality condition} in plasticity theory \cite{ lubl90,fosdick1993normality}.

To build a link between
the two frameworks we now show that a \mc\ in elasticity theory plays the role of a `` mechanism'' in
plasticity theory. Consider a loading program with
affine Dirichlet \bc s $\By(\Bx)=\BF(t)\Bx$. Suppose that $\BF(t)\in\mathfrak{B}_{1}$ for an interval of values of the loading
parameter $t$. Then, for every $t$ the deformation gradient $\BF(t)$ will be
accommodated by a laminate (\ref{YMsol}), so that
\begin{equation}
  \label{ldprog}
\BF(t)=\Gth(t)\BF_{+}(t)+(1-\Gth(t))\BF_{-}(t).
\end{equation}

We now interpret the
representation  (\ref{ldprog}) from the point of view of plasticity theory. While the deformations associated with the change of
$\BF_{+}$ and $\BF_{-}$ in each layer of the laminate are elastic,  the deformation associated with the change of parameter $t$,
affecting the \mc\ and modifying  the Young measure $\nu$, can be
regarded as ``inelastic''. In fact, it is similar to lattice invariant shear
 characterizing elementary slip in crystal
plasticity theory. To be more specific, we can decompose the
strain rate $\dot{\BF}$ as follows
\[
\dot{\BF}=\Gth\dot{\BF}_{+}+(1-\Gth)\dot{\BF}_{-}+\dot{\Gth}\jump{\BF}=\dot{\BGve}^{e}+\dot{\BGve}^{p},
\]
where $\dot{\BGve}^{e}=\Gth\dot{\BF}_{+}+(1-\Gth)\dot{\BF}_{-}$ is the elastic
strain rate and $\dot{\BGve}^{p}=\dot{\Gth}\jump{\BF}$ is the ``plastic'' strain
rate.

Next we notice that in equilibrium the ``inelastic'' strain rate $\dot{\BGve}^{p}$
defines an \emph{affine} direction along the quasiconvex envelope of the energy (see
Lemma~\ref{lem:RWformula}). This suggests that there is a stress plateau with which one can
associate a notion of the ``yield'' stress \cite{lubl90}.

To find an equation for the corresponding ``yield surface'' we choose a special
loading path where the elastic fields in the layers do not change
$\dot{\BF}_{\pm}=\Bzr$.  Then, differentiating (\ref{RWformula}) in $t$, we
find that the total stress field $\BP_{\rm tot}(t)=QW_{\BF}(\BF(t))$ lies
on the hyperplane
\begin{equation}
  \label{yieldsurf}
\mathfrak{Y}_{\CM}=\{\BP:(\BP,\jump{\BF})=\jump{W}\}
\end{equation}
which we interpret as the ``yield surface'' associated with ``plastic''  mechanism $\CM=(\BF_{+},\BF_{-})$.
If we now rewrite our  elastic
normality  condition $\mathfrak{N}=0$ in the form
\begin{equation}
  \label{strplat}
(\BP_{+},\jump{\BF})=(\BP_{-},\jump{\BF}).
\end{equation}
it becomes apparent that the ``plastic'' strain
rate $\dot{\BGve}^{p}=\dot{\Gth}\jump{\BF}$ is orthogonal to the yield surface
$\mathfrak{Y}_{\CM}$. 

To strengthen the analogy we observe that in plasticity theory the
yield surface marks the set of minimally stable elastic states \cite{satr2012}.  In elastic
framework the states $\BF_{\pm}$ adjacent to the jump discontinuity are also
only marginally stable,  see Remark~\ref{rem:marg}. The fact that in  elasticity setting the normality condition appears as a part of
energy \emph{minimization} while in plasticity theory it is usually derived by maximizing
plastic \emph{dissipation}, is secondary in view of the implied rate independent nature of plastic dissipation \cite{mitr12}.

The analogy between elastic and plastic  normality conditions becomes more transparent if  we consider a simple  
example.  Suppose that our material is isotropic and the deformation is anti-plane shear.  Take the energy density in the form 
\begin{equation}
  \label{Wexdef}
  W(\BF)=\min\left\{\frac{\mu_{+}}{2}|\BF|^{2}+w_{+},\frac{\mu_{-}}{2}|\BF|^{2}+w_{-}\right\}.
\end{equation}
where $\BF\in\bb{R}^{2}$and  the shear moduli of the ``phases''
$\mu_{\pm}$ are positive.
\begin{figure}[t]
  \centering
  \includegraphics[scale=0.5]{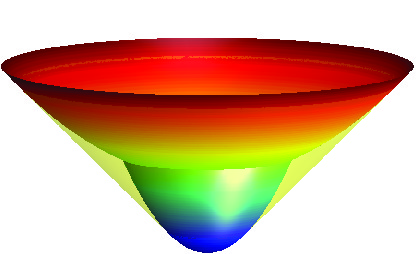}
  \caption{Anti-plane shear energy density function and its convex envelope.}
  \label{fig:aps}
\end{figure}
In this scalar example the quasiconvex and convex envelopes of the energy
density coincide, and hence we can write (see Fig.~\ref{fig:aps})
\[
QW(\BF)=CW(\BF)=\left\{
  \begin{array}{ll}
    \dfrac{\mu_{+}}{2}|\BF|^{2}+w_{+},&\text{ if } |\BF|\le\Gve_{+}\\[3ex]
\dfrac{\mu_{-}}{2}|\BF|^{2}+w_{-},&\text{ if } |\BF|\ge\Gve_{-}\\[3ex]
|\BF|\sqrt{-\dfrac{2\jump{w}\mu_{+}\mu_{-}}{\jump{\mu}}}+\dfrac{\jump{\mu w}}{\jump{\mu}},&
\text{ if }\Gve_{+}\le|\BF|\le\Gve_{-}.
  \end{array}\right.,
\]
where
\[
\Gve_{+}=\sqrt{\dfrac{-2\jump{w}\mu_{-}}{\jump{\mu}\mu_{+}}},\qquad
\Gve_{-}=\sqrt{-\dfrac{2\jump{w}\mu_{+}}{\jump{\mu}\mu_{-}}}.
\]
Observe also that the binodal region
\[
\mathfrak{B}=\{\BF\in\bb{R}^{2}:\Gve_{+}\le |\BF|\le\Gve_{-}\}.
\]
coincides with the simple laminate region $\mathfrak{B}_{1}$ since for
$\BF_{0}\in\mathfrak{B}$ the gradient Young measures
\[
\nu(\BF)=\Gth\Gd_{\BF_{+}}(\BF)+(1-\Gth)\Gd_{\BF_{-}}(\BF),\qquad
\Gth=\frac{|\BF_{0}|-\Gve_{-}}{\jump{\Gve}},\quad
\BF_{\pm}=\frac{\Gve_{\pm}}{|\BF_{0}|}\BF_{0}
\]
attain the infimum in (\ref{homloc}).
\begin{figure}[t]
  \centering
  \includegraphics[scale=0.5]{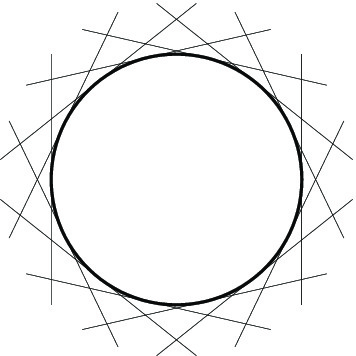}
  \caption{Envelope of yield lines in anti-plane shear.}
  \label{fig:env}
\end{figure}
By fixing $\BF_{+}$ on the circle $\CC_{+}=\{\BF\in\bb{R}^{2}:|\BF|=\Gve_{+}\}$ we then obtain
the unique
\[
\BF_{-}=\frac{\Gve_{-}}{\Gve_{+}}\BF_{+},
\]
which furnishes the ``plastic'' mechanism $\CM=(\BF_{+},\BF_{-})$. The associated ``yield plane''
$\mathfrak{Y}_{\CM}$ can be written explicitly
\[
\mathfrak{Y}_{\CM}=\left\{\BP\in\bb{R}^{2}:
\BP\cdot\BF_{+}=\frac{2\Gve_{+}\jump{w}}{\jump{\Gve}}\right\}.
\]
We observe that as $\BF_{+}$ is varied over the circle $\CC_{+}$, the yield
lines $\mathfrak{Y}_{\CM}$ form an envelope of the circle
\[
\CP=\left\{\BP\in\bb{R}^{2}:|\BP|=\frac{2\jump{w}}{\jump{\Gve}}\right\},
\]
in stress space (see Fig.~\ref{fig:env}), which is the image of the binodal
$\Md\mathfrak{B}$ under the map $W_{\BF}(\BF)$.

Since the stress in each phase of the laminate is always the same
\[
\BP_{+}=\BP_{-}=\mu_{+}\BF_{+}=\mu_{-}\BF_{-},
\]
we can write
\[
|\BP_{\pm}|^{2}=\mu_{+}^{2}\Gve_{+}^{2}=-\frac{2\jump{w}\mu_{+}\mu_{-}}{\jump{\mu}}=
\frac{4\jump{w}^{2}}{\jump{\Gve}^{2}}.
\]
Thus, in an arbitrary loading program the total stress
$\BP(t)=\Gth\BP_{+}+(1-\Gth)\BP_{-}$ will be confined to the yield surface
envelope $\CP$, provided $\BF(t)\in\mathfrak{B}$.

Two cautionary notes are in order. First, in contrast with conventional plasticity theory, the regions of stress space both inside and outside of
the ``yield'' surface $\CP$ are elastic. This distinguishes our "transformational plasticity" where hysteresis is infinitely narrow, from the classical plasticity where hysteresis is essential. Such geometric picture continue to hold  as long as
$\jump{\BP}=\Bzr$, in particular, it holds for all scalar
problems ($m=1$). The second observation is that 
for $\jump{\BP}\not=0$, our ``hardening free''  plastic analogy breaks down because the total stress in an arbitrary
loading program is no longer confined to any surface. In this case the "plastic" mechanism operates on a set of full measure and the proposed analogy requires a generalization.


\medskip

\section{Acknowledgement}
This material is based upon work supported by the National Science Foundation
under Grant No. 1008092 and the French ANR grant EVOCRIT (2008-2012).


\end{document}